\newcommand{\eqref}[1]{(\ref{#1})}
\def\ci{\perp\!\!\!\perp}
\newtheorem{theorem}{Theorem}
\newtheorem{Theoremm}{Theorem} 
\newtheorem{lemma}[theorem]{Lemma}
\newtheorem{proposition}[theorem]{Proposition}
\newcommand{\E}{\mathrm{E}}
\newcommand{\X}{X}
\newcommand{\Y}{Y}
\newcommand{\thet}{\theta}
\newcommand{\opt}{\mathrm{opt}}
\newcommand{\phat}{\hat{p}}
\newcommand{\thetab}{\theta}
\newcommand{\that}{\hat{\theta}}
\newcommand{\dhat}{\hat{d}}
\newcommand{\x}{x}
\newcommand{\y}{y}
\newcommand{\smallo}{o}
\newcommand{\RR}{\mathbb{R}}
\newcommand{\hf}{ {\frac{1}{2}}}
\newcommand{\mR}{\mathbb{R}}
\newcommand{\me}{\mathfrak{m} (\eta)}
\begin{document}
\begin{frontmatter}

\title{Exact minimax estimation of the predictive density in
sparse Gaussian models\thanksref{T1}}
\runtitle{Sparse predictive density estimation}
\thankstext{T1}{Supported in part by NSF Grant DMS-09-06812
and NIH Grant R01 EB001988.}

\begin{aug}
\author[A]{\fnms{Gourab} \snm{Mukherjee}\ead[label=e1]{gourab@usc.edu}}
\and
\author[B]{\fnms{Iain M.} \snm{Johnstone}\corref{}\ead[label=e2]{imj@stanford.edu}}
\runauthor{G. Mukherjee and I.~M. Johnstone}
\affiliation{University of Southern California and Stanford University}

\address[A]{Department of Data Sciences and Operations\\
Marshall School of Business \\
University of Southern California\\
Los Angeles, California 90089-0809\\
USA\\
\printead{e1}}
\address[B]{Department of Statistics\\
Sequoia Hall, 390 Serra Mall\\
Stanford University\\
Stanford, California 94305-4065\\
USA\\
\printead{e2}}
\end{aug}

%
\received{\smonth{1} \syear{2014}}
%
\revised{\smonth{6} \syear{2014}}

%
\begin{abstract}
We consider estimating the predictive density under Kullback--Leibler
loss in an $\ell_0$ sparse Gaussian sequence model. Explicit
expressions of the first order minimax risk along with its exact
constant, asymptotically least favorable priors and optimal
predictive density estimates are derived. Compared to the sparse
recovery results involving point estimation of the normal mean, new
decision theoretic phenomena are seen. Suboptimal performance
of the class of plug-in density estimates reflects the predictive
nature of the problem and optimal strategies need diversification of
the future risk. We find that minimax optimal strategies lie outside
the Gaussian family but can be constructed with threshold predictive
density estimates. Novel minimax techniques involving simultaneous
calibration of the sparsity adjustment and the risk diversification
mechanisms are used to design optimal predictive density estimates.
\end{abstract}

%
\begin{keyword}[class=AMS]
\kwd[Primary ]{62C20}
\kwd[; secondary ]{62M20}
\kwd{60G25}
\kwd{91G70}
\end{keyword}
\begin{keyword}
\kwd{Predictive density}
\kwd{risk diversification}
\kwd{minimax}
\kwd{sparsity}
\kwd{high-dimensional}
\kwd{mutual information}
\kwd{plug-in risk}
\kwd{thresholding}
\end{keyword}
\end{frontmatter}

\section{Introduction}\label{sec-intro}

Statistical prediction analysis aims to use past data
to choose a probability distribution that will be good in predicting
the behavior of future samples.
This well-established subject [\citet{Aitchison-book,Geisser-book}]
finds application in game theory, econometrics, information theory,
machine learning, mathematical finance, etc.

In this paper we study predictive density estimation in a
high-dimensional setting and, in particular, explore the consequences
of sparsity assumptions on the unknown parameters.

\subsection{Main results}
\label{sec:main-results}

We begin by describing some of our main results: fuller references,
background and interpretation follow in Section~\ref{sec:backgr-prev-work}.

We work in the simplest Gaussian model for high-dimensional
prediction:
%
\begin{equation}
\label{eq:modelM2} \X\sim N_n(\thet, v_x I), \qquad\Y\sim
N_n(\thet, v_y I),\qquad \X\ci\Y| \thet.
\end{equation}
On the basis of the ``past'' observation vector $\X$, we seek to predict
the distribution of a future observation $\Y$.
The past and future observations are independent, but are
linked by the common mean parameter
$\thet$, assumed to be unknown.
Note, however, that the variances, assumed here to be known, may
differ.
We write $p({x}| \thet, v_x)$ and $p({y}|
\thet, v_y)$ for the probability
densities of $\X$ and $\Y$, respectively.

We seek estimators $\hat p({y}|{x})$ of the
future observation density
$p({y}|\thet, v_y)$, and to compare their performance
under sparsity
assumptions on $\thet$.
We recall two natural ways of generating large classes of estimators.
Perhaps simplest are the ``plug-in'' or estimative densities: given a
point estimate $\hat\thet(\X)$, simply set
$\hat p({y}|{x}) = p({y}|\hat\thet)$.
We often use the abbreviation $p[\that]$.
Second, given any prior measure $\pi( d \thet)$, proper or improper,
such that the posterior $\pi(d \thet| {x})$ is well
defined, the Bayes
predictive density is
%
\begin{equation}
\label{eq:bpd} \hat p_\pi({y}|{x}) = \int p(
{y}|\thet, v_y) \pi(d \thet|{x}).
\end{equation}
The important case of a uniform prior measure $\pi(d \thet) = d
\thet$ leads to predictive density
$\hat p_U({y}|{x})$, easily seen to correspond to
$N_n({x}, (v_x + v_y)I)$.

We will examine similarities and differences between high-dimensional
prediction and high-dimensional estimation.
In particular,
$\hat p_U({y}|{x})$ plays in prediction
the role of the maximum likelihood estimator
$\hat\thet_{\mathrm{MLE}} ({x}) = {x}$ in the
multinormal mean estimation
setting.
In contrast to the corresponding plug-in estimate $p[\that_{\mathrm{MLE}}]$,
the density $\hat p_U$ incorporates the variability of the location
estimate which leads to a flattening of the estimator:
$v_x + v_y > v_y$.

To evaluate the performance of a predictive density estimator $\hat
p({y}|{x})$, we use the familiar
Kullback--Leibler ``distance'' as loss
function:
\[
L\bigl(\thet, \hat p(\cdot|{x})\bigr) = \int p({y}|\thet,
v_y) \log\frac{p(
{y}|\thet, v_y)}{\hat p({y}|{x})} \,d{y}.
\]
The corresponding K--L risk function follows by averaging over the
distribution of the past observation:
\[
\rho(\thet, \hat p) = \int L\bigl(\thet, \hat p(\cdot|{x}) p(
{x}|\thet, v_x)\bigr) \,d {x}.
\]

Given a prior measure $\pi( d\thet)$, the average or integrated risk
is
%
\begin{equation}
\label{eq:integ-risk} B(\pi, \hat p) = \int\rho(\thet, \hat p) \pi(d \thet).
\end{equation}
The Bayes predictive density (\ref{eq:bpd}) can be shown to
minimize both the posterior expected loss $\int L( \thet, \hat p(\cdot|
{x})) \pi(d \thet|{x})$ and the integrated
risk $B(\pi, \hat p)$ in the
class of all density estimates.
This is a general fact in statistical decision theory [\citet{Brown-notes}],
the resulting minimum the Bayes K--L risk:
%
\begin{equation}
\label{def-bayes-risk} B(\pi) = \inf_{\hat p} B(\pi, \hat p).
\end{equation}

Our main focus is on how to optimize the predictive risk $\rho( \thet,
\hat p)$ in a high-dimensional setting under an $\ell_0$-sparsity
condition on the parameter space.
Thus, let $\| \thet\|_0 = \# \{ i\dvtx\theta_i \neq0 \}$ and
%
\begin{equation}
\label{eq:theta-n-s} \Theta_n[s] = \bigl\{ \thet\in\mathbb{R}^n
\dvtx\| \thet\|_0 \leq s \bigr\}.
\end{equation}
This ``exact'' sparsity condition has been widely used in estimation; in
this paper we initiate study of its implications for
predictive density estimation.

The minimax K--L risk for estimation over $\Theta$ is given by
%
\begin{equation}
\label{eq:mmx-risk-def} R_N(\Theta) = \inf_{\hat p} \sup
_{\thet\in\Theta} \rho( \thet, \hat p),
\end{equation}
where the infimum is taken over \textit{all} measurable predictive
density estimators $\hat p({y}|{x})$.
For comparison, we write
$R_{\mathcal{{E}}}(\Theta) = \inf_{\hat\thet} \sup_\Theta
\rho(
\thet, p[\hat\thet])$ for the minimax risk restricted to the
sub-class $\mathcal{{E}}$ of plug-in or ``estimative'' densities.

To state our main results, henceforth we will assume $v_x =
1$ and introduce the key parameters
%
\begin{equation}
\label{eq:oracle} r = v_y/v_x = v_y,\qquad
v_w = \bigl(1+r^{-1}\bigr)^{-1}.
\end{equation}
Here $v_w$ is
the ``oracle variance''
which would be the variance of the UMVUE for~$\thet$, were
\textit{both} $\X$ and $\Y$ observed.

In our asymptotic model, the dimensionality $n \to\infty$ and the
sparsity $s = s_n$ may depend on $n$, but the variance ratio $r$
remains fixed. The notation $a_n \sim b_n$ denotes
$a_n/b_n \to1$ as $n \to\infty$.

\renewcommand{\theTheoremm}{1\textsc{\alph{Theoremm}}}
\setcounter{Theoremm}{0}
\begin{Theoremm}
 \label{th:mmx-risk}
 Fix $r \in(0,\infty)$.
If $\eta_n = s_n/n \to0$, then
%
\begin{equation}
\label{eq:minimax-risk} R_N\bigl(\Theta_n[s_n]\bigr)
\sim\frac{1}{1+r} s_n \log(n/s_n) = \frac{1}{1+r}
n \eta_n \log\eta_n^{-1}.
\end{equation}
\end{Theoremm}

The minimax risk is proportional to the sparsity $s_n$, with a
logarithmic penalty factor.
The case where $s_n \equiv s$ remains constant is included.
The expression is quite analogous to that obtained for point
estimation with quadratic loss, namely,
$2 s_n \log(n/s_n)$
[\citet{Donoho94b,Donoho92} and \citet{Johnstone-book}, Chapter~8.8,
hereafter cited as \citet{Johnstone-book}].
However, we shall see that quite different
phenomena emerge in the predictive density setting.

Indeed, the future-to-past variance ratio $r$ is an important parameter
of the
predictive estimation problem. The minimax risk increases as $r$
decreases: we need to estimate the future observation density based
on increasingly noisy past observations
(in relative terms, $r = v_y/v_x$), and so the difficulty of the
density estimation problem increases.
However, the \textit{rate} of convergence with $n$ in
(\ref{eq:minimax-risk}) does not depend on $r$, and so exact
determination of the constants is needed to show the role of $r$ in
this prediction problem.

The inefficiency of plug-in estimators is an immediate consequence of
Theorem~\ref{th:mmx-risk}.
Let $q(\thet, \hat\thet) = E \| \hat\thet(\X) - \thet\|^2$ denote
the risk of point estimator $\hat\thet$ under \textit{squared-error} loss.
It is straightforward to show for a plug-in density estimate
$p[\hat\thet]$ that
$\rho( \thet, p[\hat\thet]) = q( \thet, \hat\thet)/(2r)$.
Hence, from the point estimation minimax risk just cited,
\[
R_{{\mathcal{E}}}\bigl(\Theta_n[s_n]\bigr) \sim
\frac{1}{r} s_n \log(n/s_n) \sim \biggl(1 +
\frac{1}{r} \biggr) R_N\bigl(\Theta_n[s_n]
\bigr).
\]
The inefficiency of plug-in estimators thus equals the oracle
precision,
\[
1/v_w = 1 + 1/r,
\]
and becomes arbitrarily large as the variance ratio $r \to0$.

We turn now to the asymptotically least favorable priors and optimal
estimators in Theorem~\ref{th:mmx-risk}.
Let $\delta_\lambda$ denote unit point mass at $\lambda$ and
%
\begin{equation}
\label{eq:two-point} \pi[\eta, \lambda] = (1-\eta) \delta_0 + \eta
\delta_\lambda
\end{equation}
be a univariate two-point prior: this is a sparse prior when $\eta$ is
small and $\lambda$ large.
Let
%
\begin{equation}
\label{eq:lambdadefs} \lambda_e = \sqrt{2 \log
\eta_n^{-1}(1-\eta_n)},\qquad \lambda_f =
\sqrt{v_w} \lambda_e.
\end{equation}
In point estimation based on $\X$, we recall that $\lambda_e$ is
essentially the threshold of detectability corresponding to sparsity
$\eta_n = s_n/n$.
Although $\Y$ is not yet observed, we will
see that in the prediction setting
the UMVUE scaled threshold $\lambda_f < \lambda_e$ plays a partly
analogous role.

Build a sparse high-dimensional prior from i.i.d. draws:
%
\begin{equation}
\label{eq:iid-prior} \pi_n^\mathrm{ IID} (d \thet) = \prod
_{i=1}^n \pi[\eta_n,
\lambda_f] (d \theta_i).
\end{equation}
If the sparsity $s_n$ increases without bound with $n$, then this
i.i.d. prior with scale $\lambda_f$ is asymptotically least favorable:

\begin{Theoremm}
 \label{th:least-fav}
If $s_n \to\infty$ and $s_n/n \to0$, then
\[
B\bigl(\pi_n^\mathrm{ IID}\bigr) = R_N\bigl(
\Theta_n[s_n]\bigr) \cdot\bigl(1 + o(1)\bigr).
\]
\end{Theoremm}

The assumption that $s_n \to\infty$ ensures that $\pi_n^\mathrm{ IID}$
concentrates on $\Theta[s_n]$, namely, that
$\pi_n^\mathrm{ IID}(\Theta[s_n]) \to1$ as $n \to\infty$.
This hypothesis
is not needed for Theorem~\ref{th:mmx-risk};
indeed, a sparse prior
built from ``independent blocks'' is asymptotically least favorable
assuming only $s_n/n \to0$. This more elaborate prior is described in
Section~\ref{sec-multivariate}.


Some of the novel aspects of the predictive density estimation problem
appear in the description of optimal \textit{estimators}, that is,
ones that
asymptotically attain the
minimax bound in Theorem~\ref{th:mmx-risk}.
In point estimation, the simplest asymptotically minimax rule for
sparsity $s_n$ is given by co-ordinatewise hard thresholding
$\hat\theta_i({x}) = x_i I\{ |x_i| \geq\lambda_e \}$.
For prediction, we consider the following class of univariate density
estimators as analogs of hard thresholding:
%
\begin{equation}
\label{eq:pred-thresh} \hat p_T(y_1|x_1) = \cases{
\hat p_\pi(y_1|x_1), &\quad $\mbox{if }
|x_1| \leq\lambda_e,$ \vspace *{2pt}
\cr
\hat
p_U(y_1|x_1), & \quad $\mbox{if } |x_1|
> \lambda_e$.}
\end{equation}
%
The univariate density estimates are combined to form a multivariate
predictive density estimate
via a product rule
%
\begin{equation}
\label{eq:product} \hat p_T({y}|{x}) = \prod
_{i=1}^n \hat p_T(y_i|x_i).
\end{equation}

The threshold $\lambda_e$ in \eqref{eq:pred-thresh}
is that corresponding to estimation based on
$\X$ at sparsity $\eta_n = s_n/n$.
Above the threshold, the uniform prior predictive density
$\hat p_U$ corresponds to the (unbiased) MLE.
Below threshold, we shall need the flexibility of the Bayes predictive
density (\ref{eq:bpd}).
Indeed, as explained in Section~\ref{sec-up-bound}, it does not
suffice to use $\pi=
\delta_0$, point mass at $0$, which would be the predictive analog of
thresholding to zero in point estimation.

Instead, we use a sparse univariate cluster prior $\pi=
\pi_{\mathrm{CL}}[\eta, r]$ given by
%
\begin{equation}
\label{eq:cluster} \pi= (1-\eta) \delta_0 + \frac{\eta}{2K} \sum
_{k=1}^K (\delta_{\mu_k} +
\delta_{-\mu_k}).
\end{equation}
The points $\mu_k = \mu_k(r)$ for $k = 1, \ldots, K$ are
geometrically spaced to cover
an interval $[\nu_\eta, \lambda_e + a]$ containing $[\lambda_f,
\lambda_e]$, as described in more detail below.
The key point is that it is necessary to ``diversify'' the predictive
risk by introducing prior support points to cover $[-\lambda_e,
-\lambda_f] \cup[\lambda_f,
\lambda_e]$.

More specifically, for a parameter $a = a_\eta$ given below, let
$\mu_\eta$ be the positive root of the overshoot equation
%
\begin{equation}
\label{eq:overshoot} \mu^2 + 2a \mu= \lambda_e^2,
\end{equation}
that occurs in sparse minimax point estimation
[e.g., 
\citet{Johnstone-book}, equation~(8.48)],
and then set
$\nu_\eta= \sqrt{v_w} \mu_\eta$: since
$\mu_\eta< \lambda_e$, we have $\nu_\eta< \lambda_f$.
The support points
%
\begin{equation}
\label{eq:support} \mu_1 = \nu_\eta, \qquad\mu_{k+1} =
(1+2r)^k \nu_\eta,\qquad k \geq1,
\end{equation}
with $K = \max\{ k \dvtx\mu_k \leq\lambda_e + a\}$.
We choose $a_\eta= \sqrt{2 \log\lambda_f}$.

\begin{Theoremm}
 \label{th:asy-mmx-rule}
Assume $\eta_n = s_n/n \to0$.
Let $\hat p_{T,\mathrm{CL}}(\y|\x)$ be the product predictive threshold
estimator defined by (\ref{eq:pred-thresh}) and (\ref{eq:product})
using the cluster prior $\pi_{\mathrm{CL}}[\eta_n,r]$.
Then $\hat p_{T,\mathrm{CL}}$ is asymptotically minimax:
\[
\max_{\Theta_n[s_n]} \rho(\thet, \hat p_{T,\mathrm{CL}}) =
R_N\bigl(\Theta_n[s_n]\bigr) \bigl(1+o(1)
\bigr).
\]
\end{Theoremm}

Note that the number of positive support points in the cluster prior $K
= K_\eta$ increases as $r$
decreases. For any fixed $\eta$, the cluster prior contains
in total $(2 K_\eta+1)$ support points.
Also, for any fixed $r \in(0,\infty)$ as $\eta\to0$, we have
\[
K(r)=\lim_{\eta\to0} K_{\eta} = \biggl\lfloor
\frac{\log(1+r^{-1})}{2 \log(1+2r)} \biggr\rfloor.
\]
Thus, $K(r)$ is a piecewise constant, right continuous function with
jumps as shown in Table~\ref{table:support.points}.
%
\begin{table}
\caption{Number $K(r)$ of positive support points in the cluster
prior $\pi_{\mathrm{CL}}[\eta,r]$ as $r$ varies}\label{table:support.points}\label{table1}
\begin{tabular*}{\textwidth}{@{\extracolsep{\fill}}lccccccc@{}}
\hline
$\bolds{r}$ & \textbf{0.1073} & \textbf{0.1235} &
\textbf{0.1465} & \textbf{0.1826} &
\textbf{0.2485} & \textbf{0.4196} & $\bolds{>\!0.4196}$
\\
\hline
$K(r)$ & 7 & 6 & 5 & 4 & 3 & 2 & 1\\
\hline
\end{tabular*}
\end{table}

The results presented above assume $v_x=1$. These results can be
easily extended to the general case by noting that the minimax risk
remains invariant and the scale of past observations and parameter is
divided by $\sqrt{v_x}$.

\subsection{Background and previous work}\label{sec:backgr-prev-work}

The relative entropy predictive risk $\rho( \thet, \hat p)$
measures the exponential rate of divergence of the joint
likelihood ratio over a large number of independent trials
[\citet{Larimore83}]. The minimal predictive risk estimate maximizes the
expected growth rate in repeated investment scenarios [\citet
{Cover-book}, Chapters~6, 15]. In data compression, $L(\thet, \hat
p(\cdot|{x}))$
reflects the excess average
code length that we need if we use the conditional density estimate
$\phat$ instead of the true density to construct a uniquely decodable
code for the data $\Y$ given the past ${x}$ [\citet{Mcmillan56}].
Following \citet{Bell80}, $\ell_0$-constrained minimax optimal
predictive density estimates in on our model
can be used for construction of optimal predictive schemes for
gambling, sports betting, portfolio selection and sparse coding
[\citet{Mukherjee-thesis}, Chapter~1.3].

\citet{Aitchison75,Murray77} and \citet{Ng80} showed that in most
parametric models
there exist Bayes predictive density estimates which are decision
theoretically better than the maximum likelihood plug-in estimate. An
important issue in predictive inference has always been to compare the
performance of the class $\mathcal{E}$ of point estimation (PE) based
plug-in density estimates [\citet{Nielsen96}] with that of the optimal
predictive density estimate.
In parameter spaces of fixed dimension,
large sample attributes of the predictive risk of efficient plug-in
and Bayes density estimates have been studied by \citet{Komaki96},
\citet{Hartigan98} and \citet{Aslan06}.

The high-dimensional predictive density estimation problem studied in
this paper is relevant to a number of contemporary applications,
including data compression, sequential investment with side
information and sports betting (SM).

\textit{Analogy with point estimation.} Decision theoretic parallels
between predictive density estimation under Kullback--Leibler loss and
point estimation under quadratic loss have been explored in
our Gaussian model by
\citet{Komaki04,George06,Ghosh08,Xu12} and \citet{George12}.
For \textit{unconstrained} parameter spaces $\Theta= \mathbb{R}^n$,
fundamental ideas
in Gaussian point estimation theory
can be extended to yield optimal predictive density estimates
[\citet{Komaki01,Brown08,Fourdrinier11}].
For \textit{ellipsoids},
\citet{Xu10} established an analog of the theorem of \citet{Pinsker80}
by proving that the class of all linear predictive density estimates
[see \eqref{eq:linear_estimates}]
is minimax optimal.

For \textit{sparse estimation}, instead of parallels, we found
contrasts.
Minimax risks in
the predictive density problem depend on $r$,
but this dependence is not emphasized in the admissibility results in
unrestricted spaces.
As we have seen, under sparsity
construction of optimal minimax estimators requires the notion of
\emph{diversification} of the future risk over the interval
$[\lambda_f, \lambda_e]$ in a way strongly dependent on $r$.
Thus, efficiency of the prediction schemes depend on
careful calibration of the sparsity adjustment and the risk
diversification mechanisms.

\subsection{Further results}
\label{sec:further-results}
\textit{Other classes of estimators.}
The class of \textit{linear} estimates $\mathcal{L}$ are Bayes rules
based on conjugate product normal priors. The resulting
estimators
%
\begin{equation}
\label{eq:linear_estimates} \phat_{L,\alpha} = \prod_{i=1}^n
N(\alpha_i X_i, \alpha_i + r),\qquad
\alpha_i \in[0,1],
\end{equation}
are still Gaussian but have larger variance than the future density
$p(y|\theta,r)=\phi(y|\thetab,r)$. We choose the name ``linear'' because
the conjugate prior implies linearity of the posterior mean in $X$.

The class $\mathcal{G}$ contains all product \emph{Gaussian density
estimates} $p[\hat\thet, \hat d]$
$= \prod_{i=1}^n N(\that_i, \dhat_i)$.
Clearly, $\mathcal{G}$ contains both $\mathcal{L}$ and
$\mathcal{E}$, the latter introduced after
\eqref{eq:mmx-risk-def}.
The minimax risks $R_{\mathcal{L}}(\Theta)$ and
$R_{\mathcal{G}}(\Theta)$ are defined by restricting the infimum in~(\ref{eq:mmx-risk-def}) to $\mathcal{L}$ and $\mathcal{G}$,
respectively.

We have seen after Theorem~\ref{th:mmx-risk} that
$ R_{\mathcal{E}}(\Theta_n[s_n])
\sim(1 + r^{-1}) R_N(\Theta_n[s_n])$. It turns out that extending
$\mathcal{E}$ to $\mathcal{G}$ does not help, while, as is typical for
sparse estimation, the class of linear estimators $\mathcal{L}$
performs very poorly.

\begin{proposition}
\label{prop:further-results-1}
Fix $r \in(0,\infty)$. If
$s_n/n \to0$, then
\begin{eqnarray*}
 R_{\mathcal{L}}\bigl(\Theta_n[s_n]\bigr) &=& (n/2) \log
\bigl(1+r^{-1}\bigr),
\\
 R_{\mathcal{L}}\bigl(\Theta_n[s_n]\bigr) /
R_N\bigl(\Theta_n[s_n]\bigr)& \to& \infty\quad
\mbox{and}
\\
R_{\mathcal{G}}\bigl(\Theta_n[s_n]\bigr) &\sim&
R_{\mathcal{E}}\bigl(\Theta_n[s_n]\bigr).
\end{eqnarray*}
\end{proposition}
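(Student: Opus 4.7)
For the first identity, expand the Gaussian-Gaussian KL divergence for $N(\theta_i,r)$ against $N(\alpha_i X_i,\alpha_i+r)$ and take the expectation under $X_i\sim N(\theta_i,1)$ to obtain
\[
\rho_i(\theta_i,\hat p_{L,\alpha})=\tfrac12\log\tfrac{\alpha_i+r}{r}-\tfrac{\alpha_i(1-\alpha_i)}{2(\alpha_i+r)}+\tfrac{(1-\alpha_i)^2\theta_i^2}{2(\alpha_i+r)}.
\]
Whenever $\alpha_i<1$ the $\theta_i^2$-coefficient is strictly positive, so the supremum over $\Theta_n[s_n]$, which contains $\theta$ with a single arbitrarily large coordinate whenever $s_n\ge 1$, is infinite. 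Hence every $\alpha_i=1$, which is exactly $\hat p_U$ with total risk $(n/2)\log(1+r^{-1})$. The second identity is then immediate from $R_N\sim n\eta_n\log\eta_n^{-1}/(1+r)$ (Theorem~\ref{th:mmx-risk}) and $\eta_n\log\eta_n^{-1}\to 0$.

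For the third identity, $R_\mathcal{G}\le R_\mathcal{E}$ follows from $\mathcal{E}\subset\mathcal{G}$. For the matching lower bound I would use a Bayes argument. For any product prior $\pi_n=\prod_i\pi_{n,i}$, minimization of the posterior expected KL loss within the product-Gaussian class factors over coordinates, yielding $\hat\theta_i(x)=\theta^*_i(x):=E_\pi[\theta_i|x]$ and $\hat d_i(x)=r+V^*_i(x)$ with $V^*_i(x):=\mathrm{Var}_\pi(\theta_i|x)$. Back-substitution gives
\[
B_\mathcal{G}(\pi_n)=\tfrac12\sum_i E[\log(1+V^*_i(X_i)/r)],\qquad B_\mathcal{E}(\pi_n)=\tfrac1{2r}\sum_i E[V^*_i(X_i)].
\]
I would take $\pi_n$ to be an asymptotic LFP for $\mathcal{E}$ on $\Theta_n[s_n]$ (a sparse product prior with scale of order $\lambda_e=\sqrt{2\log\eta_n^{-1}}$, modified so that $\pi_n(\Theta_n[s_n])\to 1$; in the bounded-$s_n$ regime an independent-blocks variant as in Theorem~\ref{th:least-fav} plays the same role), so that $B_\mathcal{E}(\pi_n)\sim R_\mathcal{E}(\Theta_n[s_n])$. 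The third claim then reduces to the per-coordinate identity
\[
E[\log(1+V^*(X)/r)]=E[V^*(X)/r]\,(1+o(1)).
\]

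This per-coordinate step is the main obstacle. The naive global bound $u-\log(1+u)\le u^2/2$ is too weak because $E[(V^*/r)^2]$ exceeds $E[V^*/r]$ by a factor of order $\lambda_e/r$ (coming from the narrow transition region of the posterior, where $V^*$ peaks near $\lambda_e^2/4$). I would split on the event $A=\{V^*(X)>r\}$, a thin neighborhood of the posterior transition. Using the structure of the posterior, $A$ has marginal mass of order $\eta_n\log\lambda_e/\lambda_e$ while $V^*\le\lambda_e^2/4$ on $A$, which gives $E[V^*/r\cdot\mathbf 1_A]=O(\eta_n\lambda_e\log\lambda_e/r)$. Outside $A$ the posterior decays exponentially, $V^*(X)\asymp\lambda_e^2 e^{-|Y|\lambda_e}$ with $Y$ the signed distance from the transition, so $E[(V^*/r)^2\mathbf 1_{A^c}]=O(\eta_n\lambda_e/r)$. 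Combined with $u-\log(1+u)\le\min(u,u^2/2)$, both contributions to $E[V^*/r-\log(1+V^*/r)]$ are $o(E[V^*/r])=o(\eta_n\lambda_e^2/r)$, proving the per-coordinate identity. Summing over coordinates yields $B_\mathcal{G}(\pi_n)\sim B_\mathcal{E}(\pi_n)\sim R_\mathcal{E}(\Theta_n[s_n])$, and combined with the upper bound this finishes the proof.
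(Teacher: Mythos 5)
Your treatment of the first two identities is essentially identical to the paper: derive the linear risk formula, observe the coefficient of $\theta^2$ is positive unless $\alpha=1$, conclude $R_{\mathcal L,n}=(n/2)\log(1+r^{-1})$, then compare with Theorem~\ref{th:mmx-risk}.

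For the third identity you take a genuinely different route. The paper's (heuristic) argument is structural: it works directly with the decomposition \eqref{eq:multi-sup}, shows that any near-optimal $\hat p\in\mathcal G$ must have $\rho(0,\hat p_1)=O(\eta_n\lambda_e^2/r)$, deduces that the optimal $\hat d$ forces $\hat\theta(x)\approx 0$ on $|x|\lesssim\lambda_e$, and then invokes the quadratic lower bound \eqref{eq:quadratic} to push the maximum risk up to $\lambda_e^2/(2r)$. You instead go through Bayes risks: the posterior-expected KL loss in the product-Gaussian class factors over coordinates and is minimized at $\hat\theta_i=\mathbb E[\theta_i|x]$, $\hat d_i=r+V^*_i$, giving the tidy identities $B_{\mathcal G}(\pi)=\tfrac12\sum_i\mathbb E\log(1+V^*_i/r)$ and $B_{\mathcal E}(\pi)=\tfrac1{2r}\sum_i\mathbb E V^*_i$; the claim then reduces to a tail estimate showing $\mathbb E[V^*/r-\log(1+V^*/r)]=o(\mathbb E[V^*/r])$. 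This is a clean alternative and, if anything, more systematic: it identifies exactly which scalar functional of the posterior variance controls the gap. The paper's route gives more structural insight into \emph{why} near-optimal Gaussian rules must threshold.

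There is, however, an arithmetic slip in your tail estimate on the set $A=\{V^*>r\}$. You assert $P(A)=O(\eta_n\log\lambda_e/\lambda_e)$; this undercounts. Writing $u=\lambda_e(x-\lambda_e)$, the transition set $A$ has width $u_0/\lambda_e$ with $u_0=\log(\lambda_e^2/r)$, but the marginal density $m(x)=(1-\eta)\phi(x)+\eta\phi(x-\lambda_e)$ is \emph{not} of size $\eta$ uniformly over $A$: the zero-spike term $(1-\eta)\phi(x)$ grows like $\eta\phi(0)e^{-u}$ as $u$ decreases through $[-u_0,0]$, reaching $\approx\eta\phi(0)\lambda_e^2/r$ at the boundary. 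Integrating gives $P(A)\asymp\eta\phi(0)\lambda_e/r$, which is larger than your estimate by a factor of order $\lambda_e^2/(r\log\lambda_e)$. With this corrected $P(A)$, the crude bound $\mathbb E[V^*\mathbf 1_A]\le P(A)\sup_A V^*\lesssim P(A)\lambda_e^2/4$ no longer gives $o(\eta\lambda_e^2)$. The conclusion you want is still correct, but one must integrate $V^*(x)m(x)$ directly: on the lower half of $A$, $V^*\asymp\lambda_e^2 e^{u}$ while $m\asymp\eta\phi(0)e^{-u}$, so the product is roughly constant $\asymp\eta\lambda_e^2$, and integrating over the window of $u$-width $u_0$ gives $\mathbb E[V^*\mathbf 1_A]\asymp\eta\lambda_e u_0=O(\eta\lambda_e\log\lambda_e)$, matching your stated final bound but not your intermediate reasoning. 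Your $A^c$ estimate via $\mathbb E[(V^*/r)^2\mathbf 1_{A^c}]=O(\eta\lambda_e/r)$ is correct. So the route is sound and the final estimates are right; just replace the $P(A)\cdot\sup_A V^*$ step with the direct integration of $V^*m$.
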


\textit{Univariate prediction problem.}
The product structure of our high-\break dimensional model
(\ref{eq:modelM2}), estimators (\ref{eq:product}) and priors
(\ref{eq:iid-prior}), along with concentration of measure, implies
that many aspects of our multivariate results can be understood and
proved through an associated univariate prediction problem.

In the univariate setting, assume that the past observation
$X|\theta\sim N(\theta,1)$ and the future observation
$Y|\theta\sim N(\theta,r)$.
Assume that $X$ and $Y$ are independent given $\theta$.
In addition, suppose that $\theta$ is random with distribution $\pi(d
\theta)$, assumed to belong to
%
\begin{equation}
\label{eq:m0} \mathfrak{m}(\eta)=\bigl\{\pi\in\mathcal{P}(\RR)\dvtx\pi(\theta
\neq0) \leq\eta\bigr\},
\end{equation}
where $\mathcal{P}(\RR)$ is the collection of all probability
measures in $\RR$.

A predictive density estimator $\hat p(y|x)$ is evaluated through its
integrated risk $B(\pi, \hat p)$ defined at (\ref{eq:integ-risk}).
The minimax risk for this univariate prediction problem is given by
%
\begin{equation}
\label{eq:beta-eta} \beta(\eta,r):=\inf_{\phat} \sup
_{\pi\in\mathfrak{m}(\eta)} B(\pi,\phat),
\end{equation}
and we study sparsity through the asymptotic
regime $\eta\to0$. Recall definition (\ref{eq:lambdadefs}) of the
scaled threshold $\lambda_f = \lambda_{f,\eta}$.

\begin{theorem}
\label{th:univ-mmx-result}
{\textsc{a}.} Fix $r \in(0, \infty)$. As $\eta\to0$,
%
\begin{equation}
\label{eq:beta-rate} \beta(\eta,r) = \frac{1}{2r} \eta\lambda_f^2
\bigl(1 + o(1)\bigr).
\end{equation}

{\textsc{b}.} An asymptotically least favorable prior is given by the two-point
distribution $\pi[\eta, \lambda_f(\eta)]$ of (\ref{eq:two-point}).

{\textsc{c}.} An asymptotically minimax estimator is given by the thresholding
construction (\ref{eq:pred-thresh}) combined with sparse univariate
cluster prior $\pi= \pi_{\mathrm{CL}}[\eta,r]$ defined at (\ref{eq:cluster}).
\end{theorem}

%
%
%

\subsection{Organization of the paper}
The main results of the paper are \textit{multivariate}, Theorems
\ref{th:mmx-risk}, \ref{th:least-fav} and \ref{th:asy-mmx-rule}. However, the main technical
issues in the proofs are best handled in the \textit{univariate}
setting of Theorem~\ref{th:univ-mmx-result}, whose parts \textsc{a},
\textsc{b} and \textsc{c} correspond to Theorems
\ref{th:mmx-risk}, \ref{th:least-fav} and \ref{th:asy-mmx-rule}, respectively.
Section~\ref{sec-overview} has an overview: it first reviews some connections between
the multivariate and univariate settings, then gives
heuristic derivations for the lower and upper bounds of univariate
Theorem~\ref{th:univ-mmx-result}.
Section~\ref{sec-lower-bound} and Section~\ref{sec-up-bound},
respectively, contain the technical proofs for the lower and upper
bound on the univariate minimax risk,
Theorem \ref{th:univ-mmx-result}\textsc{b} and \ref{th:univ-mmx-result}\textsc{c}, respectively.
Together, they complete the proof of Theorem~\ref{th:univ-mmx-result}.
Proofs of the multivariate results in Theorems \ref{th:mmx-risk},
\ref{th:least-fav} and \ref{th:asy-mmx-rule}
are completed in Section~\ref{sec-multivariate}. This section also
contains a heuristic proof of Proposition~\ref{prop:further-results-1}
whose rigorous proof is presented in the supplementary material
[\citet{Supplementary}].

\textit{Glossary.} [The notation \eqref{eq:mmx-risk-def}$+$2 refers to
text 2 lines after equation \eqref{eq:mmx-risk-def}].

Estimators: Bayes $\hat p_\pi$ \eqref{eq:bpd},
Uniform prior $\hat p_U$ \eqref{eq:bpd}$+$2,
Threshold $\hat p_T$ \eqref{eq:pred-thresh},
Multivariate product $\hat p(y|x)$ \eqref{eq:product};
Univariate $\hat p(y_1|x_1)$.

Classes of estimators and multivariate minimax risks:
all nonlinear $N, R_N$ \eqref{eq:mmx-risk-def},
estimative $\mathcal{E}, R_{\mathcal{E}}$ \eqref{eq:mmx-risk-def}$+$2,
``linear'' $\mathcal{L}, R_{\mathcal{L}}$ \eqref{eq:linear_estimates},
Gaussian $\mathcal{G},R_{\mathcal{G}}$ \eqref
{eq:linear_estimates}$+$4.

Univariate minimax risk: $\beta$ \eqref{eq:beta-eta}.

Parameter spaces: multivariate $\Theta_n[s]$ \eqref{eq:theta-n-s};
univariate $\mathfrak{m}(\eta)$ \eqref{eq:m0}.

Priors: Univariate: two point $\pi[\eta,\lambda]$ \eqref{eq:two-point},
cluster $\pi_{\mathrm{CL}}[\eta,r]$ \eqref{eq:cluster},
Multivariate: $\pi_n^\mathrm{ IID}$ \eqref{eq:iid-prior}.

Parameters: variance ratio $r = v_y/v_x$, oracle variance
$v_w$ \eqref{eq:oracle},
sparsity $\eta$ \eqref{eq:two-point},
thresholds $\lambda_e, \lambda_f$ \eqref{eq:lambdadefs},
cluster prior: overshoot $a$ \eqref{eq:overshoot}, $\nu_\eta$
\eqref{eq:overshoot}$+$2.


\section{Proof overview and interpretation}\label{sec-overview}

\subsection{Connections between multivariate and univariate settings}
\label{sec:conn-betw-mult}

Many aspects of the multivariate theorem may be understood, and in
part proved, through a discussion of the univariate prediction problem
of Theorem~\ref{th:univ-mmx-result}.
An obvious connection between the univariate and multivariate
approaches runs as follows: suppose that a multivariate predictive
estimator is built as a product of univariate components
%
\begin{equation}
\label{eq:prod-rule} \hat p(y|x) = \prod_{i=1}^n
\hat p_1(y_i|x_i).
\end{equation}
Suppose also that to a vector $\theta= (\theta_i)$ we associate a
univariate (discrete) distribution
$\pi_n^e = n^{-1} \sum_{i=1}^n \delta_{\theta_i}$.
Since the true multivariate future density $p(Y| \theta, r)$ is also a
product of univariate
components, it is then readily seen that the multivariate and univariate
Bayes K--L risks are related by
%
\begin{equation}
\label{eq:prod-risk} \rho( \theta, \hat p) = \sum_{i=1}^n
\rho(\theta_i, \hat p_1) = n B\bigl(
\pi_n^e, \hat p_1\bigr).
\end{equation}
The sparsity condition $\Theta_n[s_n]$ in the multivariate problem
corresponds to requiring that the prior $\pi= \pi_n^e$ in the
univariate problem satisfies
\[
\pi\{ \theta_1 \neq0 \} \leq s_n/n=\eta_n,
\]
and thus belongs to the class $\mathfrak{m}(\eta)$ defined in \eqref{eq:m0}.
Next, we outline the minimax risk calculations for the sparse
predictive density estimation problem.

%
As a first illustration, to which we return later, consider
the maximum risk of a product rule over $\Theta_n[s_n]$: using
\eqref{eq:prod-risk} and \eqref{eq:integ-risk}, we have
%
\begin{equation}
\label{eq:multi-sup} \sup_{\Theta_n[s_n]} \rho( \theta, \hat p) = n \Bigl[ (1-
\eta_n) \rho(0,\hat p_1) + \eta_n \sup
_{\theta
\in\mR} \rho(\theta, \hat p_1) \Bigr].
\end{equation}
In the univariate problem, using $\hat p_1$, we have the somewhat parallel
bound
%
\begin{equation}
\label{eq:bayes-max} \sup_{\me} B(\pi,\hat p) = (1-\eta) \rho(0,\hat
p_1) + \eta\sup_{\theta
\in\mR} \rho(\theta, \hat
p_1).
\end{equation}
Consequently, a careful study of the two univariate quantities
%
\begin{eqnarray}
\label{eq:key-parts}&&\mbox{risk at zero:}\qquad  \rho(0, \hat p_1),
\nonumber
\\[-8pt]
\\[-8pt]
\nonumber
&&\mbox{maximum risk:} \qquad \sup_\theta\rho(\theta,
\hat p_1)
\end{eqnarray}
is basic for upper bounds for both univariate and multivariate
cases.

%

\subsection{Theorem~\texorpdfstring{\protect\ref{th:univ-mmx-result}{\normalfont{\textsc{b}}}}{2B}: Univariate lower bound heuristics}
\label{sec:theor-2b:-univ}

To understand the apperance of $\lambda_f^2$ in the minimax
risks, we turn to a heuristic discussion of the lower bound, first in
the univariate case.

We use the two point priors \eqref{eq:two-point} and the definition
\eqref{eq:beta-eta}:
%
\begin{equation}
\label{eq:basic1} \beta(\eta,r) \geq B\bigl(\pi[\eta,\lambda]\bigr) = (1-\eta)
\rho(0,\hat p_\pi) + \eta\rho(\lambda, \hat p_\pi) \geq
\eta\rho(\lambda, \hat p_\pi),
\end{equation}
and look for a good bound for $\rho(\lambda, \hat p_\pi)$ for a suitable
choice of $\lambda$.

The key is a mixture representation for predictive risk of a Bayes
estimator in terms of quadratic risk, where the weighted mixture is
over noise levels $v \in[v_w, 1]$, with $v_w$ being the oracle
variance, \eqref{eq:oracle}.
\citet{Brown08}, Theorem~1, show that the predictive risk of
the Bayes predictive density estimate $\hat p_{\pi}$ is
%
\begin{equation}
\label{eq:mixture} \rho(\theta, \hat p_\pi) = \frac{1}{2} \int
_{v_w}^1 q (\theta, \hat\theta_{\pi,v}; v)
\frac{dv}{v^2},
\end{equation}
where $q (\theta, \hat\theta_{\pi,v}; v) =
E_\theta[\hat\theta_{\pi,v}(W) - \theta]^2$
is the quadratic risk of the Bayes location estimate
$\hat\theta_{\pi,v}$ for prior $\pi$ when $W \sim N(\theta,v)$.
In point estimation with quadratic loss, it is known
[\citet{Johnstone-book}, Chapter~8],
that as $\eta\to0$ an approximately least favorable prior in the
class $\me$ is given, for noise level $v=1$, by the sparse two-point
prior $\pi[\eta, \lambda_e(\eta)]$ defined in \eqref{eq:two-point} and
$\lambda_e(\eta)=\sqrt{2 \log\eta^{-1}(1-\eta) }$. This prior has the
remarkable property that points $\theta\leq\lambda_e$ are
``invisible'' in the sense that even when $\theta$ is true, the Bayes
estimator $\hat\theta_\pi= \hat\theta_{\pi,1}$ effectively
estimates $0$ rather than
$\theta$ and so makes a mean squared error
%
\begin{equation}
\label{eq:invisible} q(\theta, \hat\theta_\pi; 1) \sim\theta^2
\qquad\mbox{for } 0 \leq\theta\leq\lambda_e.
\end{equation}

Two issues arise as the noise level $v$ varies.
First, the region of invisibility will scale, becoming $0 \leq\theta
\leq\sqrt v \lambda_e$ at scale $v$.
As $v$ varies in $[v_w,1]$, the intersection of all regions of
invisibility will be $0 \leq\theta\leq\sqrt v_w \lambda_e =
\lambda_f$ as defined at \eqref{eq:lambdadefs}.
The second issue is that for a given prior $\pi$ and predictive Bayes
rule $\hat p_\pi$ in \eqref{eq:mixture}, the Bayes rules
$\hat\theta_{\pi,v}$ vary with $v$.
We return to this second point in the next section; for now we can
hope that for all $v \in[v_w,1]$,
%
\begin{equation}
\label{eq:invis-risk} q(\lambda_f, \hat\theta_{\pi,v}; v) \gtrsim
\lambda_f^2,
\end{equation}
and so, from mixture representation \eqref{eq:mixture},
\[
\rho( \lambda_f, \hat p_\pi) \gtrsim\frac{\lambda_f^2}{2}
\int_{v_w}^1 \frac{dv}{v^2} =
\frac{\lambda_f^2}{2r},
\]
since the integral evaluates to $v_w^{-1} -1 = r^{-1}$.
From this we can conjecture that for
$\pi= \pi[\eta, \lambda_f] \in\me$,
%
\begin{equation}
\label{eq:lower} B(\pi) > \eta\rho(\lambda_f, \hat p_\pi)
\gtrsim\eta\frac{\lambda_f^2}{2r}.
\end{equation}
A full proof, with slightly modified definitions, is given in
Section~\ref{sec-lower-bound}.

\subsection{Theorem~\texorpdfstring{\protect\ref{th:univ-mmx-result}{\normalfont{\textsc{c}}}}{2C}: Univariate upper bound heuristics}
\label{sec:theor-2c:-univ}

We now turn to a heuristic discussion of constructing
a density estimate to show that the lower bound
\eqref{eq:lower} is asymptotically correct.
Pursuing the analogy with point estimation, we know that
in that setting optimal
estimators can be found within the family of hard thresholding rules
$\hat\theta(x) = x I \{ |x| > \lambda\}$.
The natural analog for predictive density estimation would have the
form
%
\begin{equation}
\label{eq:p-hat-T} \hat p_{T,\pi_0}[\lambda](y|x) = \cases{ \hat
p_U (y|x), &\quad $|x| > \lambda,$ \vspace*{2pt}
\cr
\hat
p_{\pi_0}(y|x), &\quad $|x| \leq\lambda$.}
\end{equation}
To see this, note that $\hat p_U$ is the predictive Bayes rule
corresponding to the uniform prior $\pi( d\theta) = d \theta$, which
leads to the MLE $\hat\theta(x) = x$ in point estimation, while
$\hat p_{\pi_0}(y|x)$ denotes the predictive Bayes rule corresponding
to a prior concentrated entirely at $0$, so that
%
\begin{equation}
\label{eq:zero-prior} \hat p_{\pi_0}(y|x) = \phi(y|0,r)
\end{equation}
is a normal density with mean zero and variance $r$.

For the upper bound, according to definition \eqref{eq:beta-eta},
we seek an estimator $\hat p_1$ for which
$\sup_{\me} B(\pi, \hat p_1) \sim\eta\lambda_f^2/(2r)$ as $\eta
\to
0$.
In bound \eqref{eq:bayes-max}, the first component
is the risk at zero, $\rho(0, \hat p_1)$,
and it turns out that this determines the possible values of the
threshold $\lambda$ in \eqref{eq:p-hat-T}.
Thus, in order that
\[
\rho\bigl(0, \hat p_{T,\pi_0} [\lambda] \bigr) = o\bigl( \eta
\lambda_f^2\bigr),
\]
it follows [see \eqref{eq:min_threshold_size}] that the threshold
$\lambda$ should be chosen
as $\lambda= \lambda_e \sim (2 \log\eta^{-1})^{1/2}$ and not
smaller.

Turning to the second part of
\eqref{eq:key-parts}, we seek an estimator $\hat p_1$ with
%
\begin{equation}
\label{eq:unif-bd} \sup_\theta\rho(\theta, \hat p_1) =
\frac{\lambda_f^2}{2r} \cdot\bigl(1 + o(1)\bigr).
\end{equation}

We first argue that the hard thresholding analog
\eqref{eq:p-hat-T} cannot work.
Decompose the predictive risk of a univariate threshold estimator
$\hat p_T$ with threshold $\lambda_e$ into contributions due to $X$
above and below the threshold
%
\begin{eqnarray}
\label{eq:th-decom} \rho(\theta, \hat p_T) & =& E_\theta L\bigl(
\theta, \hat p( \cdot| X)\bigr)\nonumber
\\
& =& E_\theta\bigl[L\bigl(\theta, \hat p_U(\cdot|X)\bigr),
|X| > \lambda_e\bigr] + E_\theta\bigl[L\bigl(\theta, \hat
p_\pi(\cdot|X)\bigr), |X| \leq\lambda_e\bigr]
\\
& =& \rho_A(\theta) + \rho_B(\theta),\nonumber
\end{eqnarray}
say.
With the ``zero prior,'' the K--L loss is just quadratic in $\theta$,
\[
L\bigl(\theta, \hat p_{\pi_0} (Y|X)\bigr) = E_\theta\log
\frac{\phi(Y|\theta,r)}{\phi(Y|0,r)} = \frac{\theta^2}{2r},
\]
and so, in particular, for $\theta\leq\lambda_e$ we see that
%
\begin{equation}
\label{eq:quadratic} \rho(\theta, \hat p_{T,\pi_0}) \geq\rho_B(
\theta) \gtrsim\frac{\theta^2}{2r} P_\theta\bigl[|X| \leq\lambda_e\bigr]
\end{equation}
could be as large as $\lambda_e^2/(2r)$, and hence larger
than our target risk $\lambda_f^2/(2r)$.

Bearing in mind the role that two-point priors play in the lower
bound, it is perhaps natural to ask next if the threshold rule $\hat p_{T,\mathrm{LF}}$
with $\pi_0$ in \eqref{eq:p-hat-T} replaced by the (symmetrized)
two-point prior $\pi[\eta, \lambda_f]$ could cut off the
growth of the quadratic $\theta^2/(2r)$ for $|\theta| \geq\lambda_f$.
The $3$-point prior $\pi_{3}[\eta,\lambda_f] \in\me$
places probability $\eta/2$ at the two nonzero atoms at
$\pm\lambda_f$. Remarks in Section~\ref{sec-lower-bound}
show that $\pi_{3}[\eta,\lambda_f]$ is also asymptotically least favorable
for the univariate prediction problem as $\eta\to0$.
Indeed, it can be shown (see Section~\ref{sec-up-bound}) that for this
prior and for
$\lambda_f \leq|\theta| \leq\lambda_e$,
%
\begin{eqnarray}
\label{eq:three-pt-bound} \rho(\theta, \hat p_{T,\mathrm{LF}}) &\sim&\rho_B(
\theta)
\nonumber
\\[-8pt]
\\[-8pt]
\nonumber
&\leq&\frac{1}{2r} \bigl\{ \lambda_f^2 - \bigl(|
\theta| - \lambda_f\bigr)\bigl[(1+2r)\lambda_f - |\theta|
\bigr] \bigr\} + o\bigl(\lambda_f^2\bigr).
\end{eqnarray}
Consequently, the risk bound dips below $\lambda_f^2/(2r)$ for
$\lambda_f
\leq|\theta| \leq(1+2r) \lambda_f$ but increases thereafter. So,
$\hat p_{T,\mathrm{LF}}$ is minimax optimal if $\lambda_e < (1+2r) \lambda_f$,
which occurs if $r$ is sufficiently large, $r > 0.4196$ in
Table~\ref{table:support.points}. However, the upper bound exceeds our
target risk $\lambda_f^2/(2r)$ if $r \leq0.4196$.
Section~S.2 of the supplementary material [\citet
{Supplementary}] shows
rigorously that $\phat_{T,\mathrm{LF}}$ is indeed minimax suboptimal for low
values of $r$.

\begin{figure}

\includegraphics{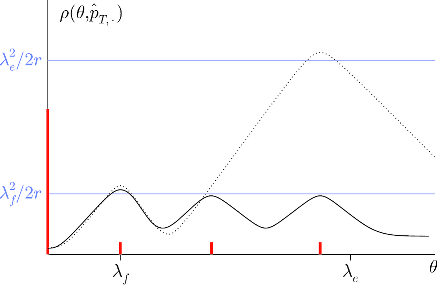}

\caption{Schematic diagram of the risk of univariate threshold
density estimates for $\theta\geq0$.
The dotted line is the risk of density estimator $\hat p_{T,\mathrm{LF}}$ based
on the $3$-point prior $\pi_3[\eta, \lambda_f]$.
The addition of appropriately spaced prior mass points (shown in red)
up to $\lambda_e$ pulls down the risk function of the cluster
prior-based density estimate $\hat p_{T,\mathrm{CL}}$ below
$\lambda_f^2/(2r)$ until the effect of thresholding at $\lambda_e$
takes over.}
\label{fig:first1}
\end{figure}

As $\pi_3[\eta,\lambda_f]$ fails to produce minimax optimal density
estimates, the strategy then is to introduce extra support points
$|\mu_k| \leq\lambda_e$
into the prior
chosen to
``pull down'' the risk $\rho_B(\theta) = E_\theta[L(\theta, \hat
p_\pi( \cdot| X)), |X| \leq\lambda_e]$ below $\lambda_f^2/(2r)$ whenever
it would otherwise exceed this level.
The schematic diagram in Figure~\ref{fig:first1} illustrates this
bounding of the maximum risk. The extra support points added in
$[\lambda_f,\lambda_e]$ and $[-\lambda_e,-\lambda_f]$
distribute the predictive risk across that range---``risk
diversification''---and keep the maximum risk below
$\lambda_f^2/(2r)(1+\smallo(1))$.

To prove that this works, we obtain upper bounds on $\rho_B(\theta)$
for $\hat p_{T,\mathrm{CL}}$ by focusing, when $\theta\in[\mu_k, \mu_{k+1}]$,
only on the prior
support point $\mu_k$. The main inequality is obtained in
\eqref{eq:main-bound}, namely,
\[
\rho_B(\theta) \leq\frac{1}{2r} \Bigl[ \lambda_f^2
+ \min_k q_k(\theta) \Bigr] + o\bigl(
\lambda_f^2\bigr),
\]
where $q_k(\theta)$ is a quadratic polynomial that is $O(\lambda_f)$
on $[\mu_k, \mu_{k+1}]$.
Putting together this and other bounds, we can then finally establish
the uniform bound \eqref{eq:unif-bd}.
The details are in Section~\ref{sec-up-bound}.


\section{Theorem~\texorpdfstring{\protect\ref{th:univ-mmx-result}{\normalfont{\textsc{b}}}}{2B}: Univariate lower bound proof} \label{sec-lower-bound}

This section is devoted to a proof of the lower bound part of Theorem~\ref{th:univ-mmx-result}.
The heuristic discussion of the last section indicated the importance
of two-point sparse priors and the invisibility property
\eqref{eq:invisible}.
To formulate a precise statement about the upper limit of
invisibility, we start with noise level $1$ and bring in the
positive solution $\mu_\eta$ of the overshoot equation
\eqref{eq:overshoot}, namely, $\mu^2 + 2 a \mu= \lambda_e^2$.
Here the ``overshoot'' parameter $a = a_\eta$ should satisfy
both $a_\eta\to\infty$ and $a_\eta= o(\mu_\eta)$;
we make the specific choice $a_\eta= \sqrt{2 \log
\lambda_{f,\eta}}$.

In preparation for the range of variance scales in mixture
representation \eqref{eq:mixture}, we consider the collection of
two-point priors $\pi[\eta, \mu]$ for $0 \leq\mu\leq\mu_\eta$.
Using a
temporary notation for this section, let $\hat\theta_\mu(x) =
E[\theta|x]$ be the Bayes rule for squared error loss for the prior
$\pi[\eta,\mu]$. The next result shows that when the true parameter
is actually $\mu$, and this nonzero support point $\mu\leq
\mu_\eta$, then the Bayes rule for $\pi[\eta, \mu]$ ``gets it wrong''
by effectively estimating $0$ and making an error of size $\mu^2$,
uniformly in $\mu\leq\mu_\eta$.

\begin{lemma}
\label{lem:qr}
There exists $\varepsilon_\eta\searrow0$ as $\eta\to0$ such that
for all $\mu$ in $[0, \mu_\eta]$,
\[
q(\mu, \hat\theta_\mu; 1) \geq\mu^2[1 -
\varepsilon_\eta].
\]
\end{lemma}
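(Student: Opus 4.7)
The plan is to use the closed form of the Bayes posterior mean for a two-point prior, and then to exploit the overshoot equation \eqref{eq:overshoot} to show that under $W\sim N(\mu,1)$ with $\mu\leq\mu_\eta$, the posterior odds ratio is uniformly small with high probability. Thus the Bayes rule $\hat\theta_\mu$ effectively estimates $0$ and incurs a quadratic error of size $\mu^2$, which is the desired ``invisibility'' statement.

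First I would write $\hat\theta_\mu(w)=\mu\,p(w)$, where $p(w)=\ell(w)/(1+\ell(w))$ is the posterior probability assigned to the atom at $\mu$ and $\ell(w)=[\eta/(1-\eta)]\exp(\mu w-\mu^2/2)$ is the posterior odds. The identity $\hat\theta_\mu(w)-\mu=-\mu/(1+\ell(w))$ gives
\begin{equation*}
q(\mu,\hat\theta_\mu;1)=\mu^2\,E_\mu\big[(1+\ell(W))^{-2}\big],
\end{equation*}
so it is enough to show $E_\mu[(1+\ell(W))^{-2}]\geq 1-\epsilon_\eta$ uniformly in $\mu\in[0,\mu_\eta]$. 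The case $\mu=0$ is trivial since $\pi[\eta,0]=\delta_0$.

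Next I would parametrize $W=\mu+Z$ with $Z\sim N(0,1)$ and use $\log[\eta/(1-\eta)]=-\lambda_e^2/2$ to get $\log\ell(W)=-\lambda_e^2/2+\mu^2/2+\mu Z$. The overshoot equation \eqref{eq:overshoot} yields $\lambda_e^2/2-\mu_\eta^2/2=a_\eta\mu_\eta$, and since $\mu^2\leq\mu_\eta^2$ this gives the crucial deterministic bound
\begin{equation*}
\log\ell(W)\leq -a_\eta\mu_\eta+\mu Z \qquad \text{for all } \mu\in[0,\mu_\eta].
\end{equation*}
Define the event $E=\{\mu Z\leq a_\eta\mu_\eta/2\}$. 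For $\mu\in(0,\mu_\eta]$ we have $P_\mu(E)=P(Z\leq a_\eta\mu_\eta/(2\mu))\geq P(Z\leq a_\eta/2)\to 1$ since $a_\eta\mu_\eta/(2\mu)\geq a_\eta/2$; the event has probability $1$ when $\mu=0$. On $E$, $\ell(W)\leq\delta_\eta:=\exp(-a_\eta\mu_\eta/2)$, which tends to $0$ because both $a_\eta\to\infty$ and $\mu_\eta\to\infty$ under the choice $a_\eta=\sqrt{2\log\lambda_{f,\eta}}$.

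Combining with the elementary inequality $(1+\delta_\eta)^{-2}\geq 1-2\delta_\eta$,
\begin{equation*}
E_\mu[(1+\ell(W))^{-2}]\geq P_\mu(E)(1+\delta_\eta)^{-2}\geq 1-P(Z>a_\eta/2)-2\delta_\eta=1-\epsilon_\eta,
\end{equation*}
where the bound is independent of $\mu$. The main subtlety, and really the only thing to watch, is the uniformity across the full range $\mu\in[0,\mu_\eta]$: the exponent $\mu^2/2-\lambda_e^2/2$ is maximized at $\mu=\mu_\eta$ where the overshoot equation gives exactly $-a_\eta\mu_\eta$, and the threshold $a_\eta\mu_\eta/(2\mu)$ for the Gaussian event $E$ only grows as $\mu$ shrinks. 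Both phenomena cut the same way, so the uniform bound holds and $\epsilon_\eta\to 0$.
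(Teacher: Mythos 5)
Your proof is correct and follows essentially the same route as the paper: both write the Bayes rule through the posterior odds $\ell$, reduce to showing $E_\mu[(1+\ell(W))^{-2}] \geq 1-\epsilon_\eta$ uniformly in $\mu\in[0,\mu_\eta]$, and invoke the overshoot equation to make the odds uniformly small. The only difference is in handling the tail: the paper keeps a $Z$-dependent bound on the odds, restricts to $\{Z<a_\eta\}$, and appeals to dominated convergence, whereas you restrict to $\{\mu Z\le a_\eta\mu_\eta/2\}$, on which $\ell\le\exp(-a_\eta\mu_\eta/2)$ is a deterministic constant, giving the slightly more explicit $\epsilon_\eta = P(Z>a_\eta/2)+2\exp(-a_\eta\mu_\eta/2)$ without needing the DCT.
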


\begin{pf}
Using standard calculations for the two-point prior, the Bayes rule
$\hat\theta_\mu= \mu p(\mu|x) = \mu/[1 + m(x)]$, with
%
\begin{equation}
\label{eq:lrat} m(x) = \frac{p(0|x)}{p(\mu|x)} =
\frac{1-\eta}{\eta} \frac{\phi(x)}{\phi(x-\mu)}
= \exp\biggl\{ \hf\lambda_e^2 - x \mu+ \hf
\mu^2 \biggr\}.
\end{equation}
Consequently,
\begin{eqnarray*}
q(\mu, \hat\theta_\mu; 1) & =& E_\mu[\hat
\theta_\mu- \mu]^2 = \mu^2 E_\mu
\bigl[\bigl(1+m(X)\bigr)^{-1} - 1\bigr]^2
\\
& = &\mu^2 E_0 \bigl[1 + m^{-1}(\mu+Z)
\bigr]^{-2},
\end{eqnarray*}
where $Z \sim N(0,1)$, and from \eqref{eq:lrat},
$m^{-1}(\mu+z) = \exp\{ \hf(\mu^2 + 2 \mu z - \lambda_e^2) \}$.

Now, using definition \eqref{eq:overshoot} of
$\mu_\eta$, for $0 \leq\mu\leq\mu_\eta$, we have
\[
\mu^2 + 2 \mu z - \lambda_e^2 \leq
\mu_\eta^2 + 2 \mu_\eta z_+ -
\lambda_e^2 = - 2 \mu_\eta(a-z_+),
\]
so that for $0 \leq\mu\leq\mu_\eta$,
\[
\mu^{-2} q(\mu, \hat\theta_\mu; 1) \geq E_0
\bigl\{ \bigl[1 + \exp\bigl(-\mu_\eta(a-Z_+) \bigr)
\bigr]^{-2}, Z < a \bigr\} 
= 1 - \varepsilon_\eta,
\]
say. For each fixed $z$, we have $\mu_\eta(a-z_+) \to\infty$ since
$a \to\infty$, and so from the dominated convergence theorem we
conclude that $\varepsilon(\eta) \to0$.
\end{pf}

With these preparations, we return to the lower bound in the
prediction problem.
As $\eta\to0$, an asymptotically least favorable distribution is
given by a sparse two-point prior with the nonzero support point
scaled using the oracle standard deviation $v_w^{1/2}$. We shall prove
the following:

\begin{lemma} \label{lem:lower-bound}
Let $\mu_\eta$ be the positive solution to overshoot equation
\eqref{eq:overshoot} with $a_\eta= \sqrt{2 \log\lambda_{f,\eta}}$.
Set $\nu_\eta= v_w^{1/2} \mu_\eta$ and consider the two-point prior
$\pi[\eta,\nu_\eta]$. Then as $\eta\to0$,
\[
\beta(\eta,r) \geq B\bigl(\pi[\eta, \nu_\eta]\bigr) \geq
\frac{\eta\lambda_f^2}{2r} \bigl(1 + o(1)\bigr).
\]
\end{lemma}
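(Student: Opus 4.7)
The first inequality $\beta(\eta,r) \ge B(\pi[\eta,\nu_\eta])$ is immediate from definition \eqref{eq:beta-eta} since $\pi[\eta,\nu_\eta] \in \mathfrak{m}(\eta)$. For the second inequality, the plan is to run the three-step heuristic argument of Section~\ref{sec-overview} rigorously: (i) drop the non-negative risk-at-zero term to get $B(\pi[\eta,\nu_\eta]) \ge \eta\,\rho(\nu_\eta,\hat p_\pi)$ as in \eqref{eq:basic1}; (ii) invoke the Brown--George--Xu mixture representation \eqref{eq:mixture} to express $\rho(\nu_\eta,\hat p_\pi)$ as an integral over $v \in [v_w,1]$ of the quadratic risks $q(\nu_\eta,\hat\theta_{\pi,v};v)$; (iii) apply a scaled version of Lemma~\ref{lem:qr} to show that for every such $v$, the Bayes location rule $\hat\theta_{\pi,v}$ is in the invisibility regime at the truth $\nu_\eta$ and so makes error of size $\nu_\eta^2$.

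The main technical step is (iii), so let me describe it carefully. For noise level $v$, by the scale change $W = \sqrt{v}\,\tilde W$, $\tilde W \sim N(\theta/\sqrt{v},1)$, the Bayes rule $\hat\theta_{\pi,v}$ for prior $\pi = \pi[\eta,\nu_\eta]$ equals $\sqrt{v}$ times the Bayes rule at noise $1$ for the rescaled two-point prior $\tilde\pi_v := \pi[\eta, \nu_\eta/\sqrt{v}]$. Hence
\begin{displaymath}
  q(\nu_\eta, \hat\theta_{\pi,v}; v)
    = v \cdot q\bigl(\nu_\eta/\sqrt{v},\ \hat\theta_{\tilde\pi_v,1};\ 1\bigr).
\end{displaymath}
For $v \in [v_w,1]$ the rescaled support point satisfies
$\nu_\eta/\sqrt{v} = \sqrt{v_w/v}\,\mu_\eta \le \mu_\eta$, so Lemma~\ref{lem:qr} applies with $\mu = \nu_\eta/\sqrt{v}$ and yields
\begin{displaymath}
  q(\nu_\eta,\hat\theta_{\pi,v};v) \ \ge\ v \cdot (\nu_\eta/\sqrt{v})^2 (1-\epsilon_\eta)
    \ =\ \nu_\eta^2(1-\epsilon_\eta),
\end{displaymath}
uniformly in $v \in [v_w,1]$, with the same $\epsilon_\eta \to 0$.

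Feeding this into \eqref{eq:mixture} gives
\begin{displaymath}
  \rho(\nu_\eta,\hat p_\pi)
  \ \ge\ \tfrac{1}{2}\nu_\eta^2(1-\epsilon_\eta)\int_{v_w}^1 \frac{dv}{v^2}
  \ =\ \frac{\nu_\eta^2(1-\epsilon_\eta)}{2}(v_w^{-1}-1)
  \ =\ \frac{\nu_\eta^2(1-\epsilon_\eta)}{2r}.
\end{displaymath}
Finally, the overshoot equation $\mu_\eta^2 + 2a_\eta\mu_\eta = \lambda_e^2$ with $a_\eta = o(\mu_\eta)$ gives $\mu_\eta^2 = \lambda_e^2(1+o(1))$, hence $\nu_\eta^2 = v_w\mu_\eta^2 = \lambda_f^2(1+o(1))$. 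Combining,
\begin{displaymath}
  B(\pi[\eta,\nu_\eta]) \ \ge\ \eta\,\rho(\nu_\eta,\hat p_\pi)
    \ \ge\ \frac{\eta\,\lambda_f^2}{2r}(1+o(1)).
\end{displaymath}

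The only real obstacle is verifying the uniform applicability of the invisibility estimate over the whole range $v \in [v_w,1]$; this is exactly what the scale choice $\nu_\eta = v_w^{1/2}\mu_\eta$ (rather than the larger $\mu_\eta$) is designed to secure, since it keeps the rescaled support points $\nu_\eta/\sqrt{v}$ inside $[0,\mu_\eta]$ for every admissible $v$, letting the single invisibility estimate of Lemma~\ref{lem:qr} do all the work. The choices of $a_\eta$ and the asymptotics $\nu_\eta^2 \sim \lambda_f^2$ are then routine bookkeeping.
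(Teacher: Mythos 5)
Your proof is correct and follows the same route as the paper's: drop the risk-at-zero term via \eqref{eq:basic1}, invoke the mixture representation \eqref{eq:mixture}, use scale invariance to reduce the quadratic risk at noise $v$ to the quadratic risk at noise $1$ for the rescaled two-point prior $\pi[\eta,\nu_\eta/\sqrt{v}]$, and apply Lemma~\ref{lem:qr} uniformly in $v\in[v_w,1]$ (the scale choice $\nu_\eta=v_w^{1/2}\mu_\eta$ ensures $\nu_\eta/\sqrt{v}\le\mu_\eta$ throughout). The paper writes the invariance as $q(\theta,\hat\theta_{\pi[\eta,\lambda],v};v)=v\,q(v^{-1/2}\theta,\hat\theta_{\pi[\eta,v^{-1/2}\lambda]};1)$ and absorbs one factor of $v$ into the integrand, but the bookkeeping matches yours exactly.
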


We note here that since $a_\eta= o(\mu_\eta)$, the
overshoot equation implies that
%
\begin{equation}
\label{eq:asy-rels} \mu_\eta\sim\lambda_{e,\eta} \quad\mbox{and}\quad
\nu_\eta\sim\lambda_{f,\eta}.
\end{equation}
A stronger conclusion, used in the next section, also follows from the
overshoot equation, namely,
%
\begin{equation}
\label{eq:diffbound} \lambda_{f,\eta}^2 - \nu_\eta^2
= v_w\bigl(\lambda_{e,\eta}^2 -
\mu_\eta^2\bigr) = v_w \cdot2 a
\mu_\eta \leq2 a v_w \lambda_{e,\eta} = 2 a \sqrt
v_w \lambda_{f,\eta}.
\end{equation}

\begin{pf*}{Proof of Lemma \ref{lem:lower-bound}}
Recall \eqref{eq:basic1} and \eqref{eq:mixture} in the heuristic
discussion.
We now clarify the dependence on scale $v$ of the Bayes rule
$\hat\theta_{\pi,v}$ in the mixture representation
\eqref{eq:mixture}.
Passing from noise level $v$ to noise level $1$ by dividing
parameters and estimates by $v^{1/2}$, we obtain the invariance
relation
\[
q(\theta, \hat\theta_{\pi[\eta, \lambda],v}; v) = v q \bigl( v^{-1/2} \theta,
\hat\theta_{\pi[\eta,
v^{-1/2} \lambda]}; 1 \bigr).
\]
Now set $\theta= \nu_\eta$ and substitute into \eqref{eq:mixture} to
obtain,
for $\pi= \pi[\eta, \nu_\eta]$,
%
\begin{equation}
\label{eq:mixture1} \rho(\nu_\eta, \hat p_\pi) =
\frac{1}{2} \int_{v_w}^1 q \bigl(
v^{-1/2} \nu_\eta, \hat\theta_{\pi[\eta,
v^{-1/2} \nu_\eta]}; 1 \bigr)
\frac{dv}{v}.
\end{equation}
Now apply Lemma~\ref{lem:qr} with $\mu= v^{-1/2} \nu_\eta$ being
bounded above by $v_w^{-1/2} \nu_\eta= \mu_\eta$.
For all $v \in[v_w,1]$ we obtain
\[
q\bigl(v^{-1/2}, \hat\theta_{v^{-1/2} \nu_\eta}; 1\bigr) \geq
v^{-1} \nu_\eta^2 [1 - \varepsilon_\eta].
\]
Putting this into the mixture representation, we get
\[
\rho(\nu_\eta, \hat p_\pi) \geq\frac{1}{2}
\nu_\eta^2 [1 - \varepsilon_\eta] \int
_{v_w}^1 \frac{dv}{v^2} = \frac{\nu_\eta^2}{2r} [1
- \varepsilon_\eta].
\]
Taking into account both \eqref{eq:basic1} and \eqref{eq:asy-rels}, we
have established the lemma. 
\end{pf*}

Based on the discussion in Section~\ref{sec-overview}, the above
lemma establishes a lower bound on the asymptotic minimax risk
$\beta(\eta,r)$ in Theorem~\ref{th:univ-mmx-result}.
Similarly, the symmetric $3$-point prior
\[
\pi_3[\eta,\nu_{\eta}]=(1-\eta) \delta_0 + (
\eta/2) \{\delta_{\nu_{\eta}} + \delta_{-\nu_{\eta}} \}
\]
will also be
asymptotically least favorable over $\me$ as $\eta\to0$.


\section{Theorem~\texorpdfstring{\protect\ref{th:univ-mmx-result}{\normalfont{\textsc{c}}}}{2C}: Univariate upper bound proof}\label{sec-up-bound}

The upper bound on the predictive minimax risk $\beta(\eta,r)$ is
derived from the upper bound on the maximum Bayes risk of $ \hat
p_{T,\mathrm{CL}}$ over $\me$.
In this section we will prove the following lemma which along with
Lemma~\ref{lem:lower-bound} completes the proof of Theorem~\ref
{th:univ-mmx-result}.

\begin{lemma}\label{lem:up-bound-sec4}
For any $r \in(0, \infty)$ we have, as $\eta\to0$,
\[
\sup_{\pi\in\me} B(\pi, \hat p_{T,\mathrm{CL}}) \leq
\frac{\eta\lambda_f^2}{2r} \bigl(1 + o(1)\bigr).
\]
\end{lemma}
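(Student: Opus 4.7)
The plan is to reduce the supremum over $\pi \in \me$ to bounds on two scalar quantities and then estimate each using the above-threshold / below-threshold decomposition of $\hat p_{T,CL}$. Since $\hat p_{T,CL}$ does not depend on $\pi$, the identity \eqref{eq:bayes-max} gives
\[
 \sup_{\pi \in \me} B(\pi, \hat p_{T,CL})
  = (1-\eta) \rho(0, \hat p_{T,CL}) + \eta \sup_\theta \rho(\theta, \hat p_{T,CL}).
\]
It therefore suffices to establish (i) $\rho(0, \hat p_{T,CL}) = o(\eta \lambda_f^2)$ and (ii) $\sup_\theta \rho(\theta, \hat p_{T,CL}) \leq \lambda_f^2/(2r)\cdot(1+o(1))$. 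For each $\theta$, I would then split the risk via the decomposition $\rho = \rho_A + \rho_B$ of \eqref{eq:th-decom}, where $\rho_A$ integrates the loss of $\hat p_U$ over $\{|X|>\lambda_e\}$ and $\rho_B$ integrates the loss of $\hat p_{\pi_{CL}}$ over $\{|X|\leq \lambda_e\}$.

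For part (i), the Gaussian tail bound $P_0(|X|>\lambda_e) \leq \eta^{1+o(1)}$ makes $\rho_A(0)$ exponentially smaller than $\eta\lambda_f^2$; and for $\rho_B(0)$, since $\pi_{CL}$ puts mass $1-\eta$ at $0$, the posterior given small $|X|$ concentrates on the zero atom, so the K-L loss is bounded by $\eta$ times a factor involving $\mu_K^2 = O(\lambda_e^2)$ weighted by posterior likelihood ratios of the form \eqref{eq:lrat}. This calculation simultaneously pins down why $\lambda_e$ is the smallest admissible threshold, since any smaller choice would lose control of $\rho_A(0)$.

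For part (ii), $\rho_A(\theta)$ is handled via the explicit K-L divergence between $N(\theta,r)$ and $N(X, 1+r)$, whose $\theta$-dependent piece $(X-\theta)^2/(2(1+r))$ is small under $\{|X|>\lambda_e\}$ except when $|\theta| \gtrsim \lambda_e$, and the leading $\tfrac12 \log(1+r^{-1})$ constant is absorbed into the $o(\lambda_f^2)$ remainder since $\lambda_f^2 \to \infty$. The main work is the bound on $\rho_B(\theta)$ via the mixture representation \eqref{eq:mixture} applied to $\pi_{CL}$: for each $\theta$, locate the index $k$ with $|\theta| \in [\mu_k, \mu_{k+1}]$, and for each noise level $v \in [v_w, 1]$ lower bound the posterior weight at the closest nonzero atom (scaled by $v^{-1/2}$) using the same likelihood-ratio manipulations underlying Lemma~\ref{lem:qr}. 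Integrating in $v$ delivers the key inequality \eqref{eq:main-bound},
\[
 \rho_B(\theta) \leq \frac{1}{2r}\bigl[\lambda_f^2 + \min_k q_k(\theta)\bigr] + o(\lambda_f^2),
\]
where each $q_k$ is a quadratic designed so that the geometric spacing $\mu_{k+1} = (1+2r)\mu_k$ and the overshoot parameter $a_\eta = \sqrt{2 \log \lambda_{f,\eta}}$ force $\min_k q_k(\theta) \leq 0$ throughout $|\theta| \in [\lambda_f, \lambda_e + a_\eta]$. Beyond $\lambda_e + a_\eta$, $P_\theta(|X| \leq \lambda_e)$ decays superpolynomially and $\rho_B$ becomes negligible, while $\rho_A$ is controlled directly from the K-L formula above.

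The main obstacle will be establishing \eqref{eq:main-bound} uniformly in $v \in [v_w, 1]$. Because $\pi_{CL}$ has $2K_\eta + 1$ atoms, the posterior weight of the zero atom must be shown to be small enough that it does not introduce a $\theta^2/(2r)$ penalty of the type appearing in \eqref{eq:quadratic}, while the nearest nonzero atom $\mu_k$ must simultaneously dominate so as to produce the quadratic $q_k$ with the correct curvature. Once \eqref{eq:main-bound} is in hand, the boundedness of $K_\eta$ as $\eta \to 0$ (per Table~\ref{table:support.points}) converts the $\min_k$ into a uniform nonpositive correction that absorbs the leading $\lambda_f^2/(2r)$, yielding (ii) and completing the proof of the lemma.
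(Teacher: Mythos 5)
Your overall decomposition matches the paper: split into risk at zero and maximum risk via \eqref{eq:bayes-max}, and further split each into $\rho_A$ (above threshold, using $\hat p_U$) and $\rho_B$ (below threshold, using the cluster Bayes density). The treatment of the risk at zero and of $\rho_A(\theta)$ is essentially correct and parallels the paper, including absorbing the $O(1)$ constant $\tfrac12\log(1+r^{-1})$ into the $o(\lambda_f^2)$ remainder.

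The central step, however, is bounding $\rho_B(\theta)$, and here your proposed route has a genuine gap. You propose to obtain \eqref{eq:main-bound} from the Brown--George--Xu mixture representation \eqref{eq:mixture}, applied to $\pi_{CL}$ and integrated over noise levels $v\in[v_w,1]$. But \eqref{eq:mixture} is an identity for the \emph{full} Bayes predictive risk $\rho(\theta,\hat p_\pi)=E_\theta L(\theta,\hat p_\pi(\cdot|X))$; there is no analogous mixture representation for the truncated quantity $\rho_B(\theta)=E_\theta[L(\theta,\hat p_\pi(\cdot|X)),\,|X|\le\lambda_e]$. Using $\rho_B\le\rho(\theta,\hat p_{\pi_{CL}})$ is not a remedy: the cluster prior has bounded support, so the untruncated Bayes risk $\rho(\theta,\hat p_{\pi_{CL}})$ is unbounded in $\theta$, and in the transition region $\theta\approx\lambda_e$ the untruncated risk is already much too large because the positive contribution from $\{|X|>\lambda_e\}$ is not small. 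The thresholding is what saves the day there, and the mixture representation cannot see it. The paper instead obtains \eqref{eq:main-bound} by a much more elementary device: since $\hat p_\pi(y|x)$ is a nonnegative sum over support points, retain only the $k$-th term (cf. \eqref{eq:BayesP}--\eqref{eq:second}) to get a pointwise upper bound on the K-L loss for $|x|\le\lambda_e$, then bound the leftover ``log posterior normalizer'' $d(x)$ by $\log 2$ and the linear-in-$z$ term directly. No mixture over $v$, and hence no need to control posterior behavior at every noise level. Note also that the mixture identity is used in the paper only for the \emph{lower} bound (Lemma \ref{lem:lower-bound}), where one needs a lower bound on quadratic risk and the truncation issue does not arise.

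A smaller inaccuracy: you assert the geometric spacing and the overshoot parameter force $\min_k q_k(\theta)\le 0$ on $[\lambda_f,\lambda_e+a_\eta]$. In fact $q_k(\mu_k)=q_k(\mu_{k+1})=r(\lambda_f^2-\mu_k^2)$, which is strictly positive for $k=1$ since $\mu_1=\nu_\eta<\lambda_f$. What the geometry actually gives is $\min_k q_k(\theta)\le r(\lambda_f^2-\nu_\eta^2)$, and it is the quantitative consequence of the overshoot equation, $\lambda_f^2-\nu_\eta^2\le 2a\sqrt{v_w}\,\lambda_f=o(\lambda_f^2)$ (equation \eqref{eq:diffbound}), that makes the correction negligible. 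Without that explicit estimate your argument would not close.
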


We consider a threshold predictive density estimate $\hat p_T$ which
uses the Bayes predictive density estimate from prior $\pi$ below the
threshold $\lambda_e$ and $\hat p_U$ above the threshold $\lambda_e$.
We bound the maximum predictive risk over $\me$:
%
\begin{equation}
\label{eq:decomp} \sup_{\pi\in\mathfrak{m}(\eta)} B(\pi, \hat p_T)
\leq(1- \eta) \rho(0, \hat p_T) + \eta\sup_{\theta}
\rho(\theta, \hat p_T).
\end{equation}
Next, as in \eqref{eq:th-decom}, we decompose the predictive risk of
$\hat p_T$ into contributions due to $X$ above and below the
threshold.
We calculate explicit expressions for $\rho_A$ and $\rho_B$.
The predictive loss of $\hat\rho_U$ (see Appendix~\ref
{A:gaussian_risk}) is given by
%
\begin{equation}
\label{eq:invloss} L\bigl(\theta, \hat p_U(\cdot|x)\bigr) =
a_{1r} + a_{2r}(\theta- x)^2
\end{equation}
with $a_{1r} = \hf[\log(1+r^{-1}) - (1+r)^{-1}]$ and
$a_{2r} = \hf(1+r)^{-1}$.
Hence, the above threshold term
%
\begin{equation}
\label{eq:rho-a} \rho_A(\theta) = a_{1r}
P_\theta\bigl(|X| > \lambda_e\bigr) + a_{2r}
E_\theta\bigl[(X-\theta)^2, |X|>\lambda_e\bigr].
\end{equation}
As $\rho_B(\theta)$ depends on the prior $\pi$ used below the
threshold, we restrict our attention to the specific choice of the
cluster prior. The risk functions of
the hard threshold density estimate $\hat p_{T,\pi_0}$ and that of
$\hat p_{T,\mathrm{LF}}$ can be easily derived from the calculations with the
cluster prior.

According to \eqref{eq:bayes_discrete} in the \hyperref[app]{Appendix}, the Bayes predictive
density for a discrete prior
$\pi=\sum_{k=-K}^K \pi_k \delta_{\mu_k}$ is given by
%
\begin{equation}
\label{eq:BayesP} \hat p_\pi(y|x) = \sum_{-K}^K
\phi(y|\mu_k,r) \pi_k \phi(x-\mu_k)/m(x),
\end{equation}
where $m(x) = \sum_k \pi_k \phi(x-\mu_k)$ denotes the marginal density
of $\pi$.
The K--L loss of $\hat p_\pi(\cdot|x)$ is given by
\[
L\bigl(\theta, \hat p_\pi(\cdot|x)\bigr) = \E_\theta\log
\frac{\phi(Y|\theta,r)}{\hat p_\pi(Y|x)}.
\]
A simple but informative upper bound for the K--L loss is obtained by
retaining only the $k$th term in \eqref{eq:BayesP}:
%
\begin{eqnarray}
\label{eq:second} L\bigl(\theta, \hat p_\pi(\cdot|x)\bigr) & \leq&
\E_\theta\log\frac{\phi(Y|\theta,r)}{\phi(Y|\mu_k,r)} - \log\frac{\pi_k \phi(x-\mu_k)}{\pi_0 \phi(x)} + \log
\frac{m(x)}{\pi_0 \phi(x)}
\nonumber
\\[-8pt]
\\[-8pt]
\nonumber
& =& \frac{1}{2r} (\theta- \mu_k)^2 +
\frac{1}{2} \bigl(\mu_k^2 - 2 x \mu_k
\bigr) - \log\frac{\pi_k}{\pi_0} + d(x),
\end{eqnarray}
where we have set
$d(x) = \log[m(x)/(\pi_0 \phi(x))]$.

We are now ready to analyze the bound \eqref{eq:decomp}.
We follow the steps recalled in the quadratic loss case [see
Section~S.4 of \citet{Supplementary}] and
evaluate the predictive risk at the origin and the maximum risk of the
threshold density estimate $\hat p_T$.
This organization helps to make clear the new features of the
predictive loss setting.

\textit{Risk at zero.} It is easy to show that $\rho(0, \hat p_T) =
O(\eta\lambda_f)$.
First, from \eqref{eq:rho-a}, we have
\[
\rho_A(0) = 2 a_{1r} \tilde\Phi(\lambda_e) +
a_{2r} q_A(0) = O(\eta\lambda_f),
\]
where $q_A(0)$ is defined in (S.4.2) and the above
calculation follows by using $\tilde\Phi(\lambda_e) \leq\lambda
_e^{-1} \phi(\lambda_e) =
O(\lambda_e^{-1} \eta)$ and the quadratic risk-at-zero bound
(S.4.4).

For the below-threshold term, we set $k = 0$ in \eqref{eq:second},
note that $\mu_0 = 0$ and apply Jensen's inequality to obtain
\[
\rho_B(0) = E_0 \bigl[ L\bigl(0,\hat
p_\pi(\cdot|X)\bigr), |X| \leq\lambda\bigr] \leq E_0
\bigl[d(X)\bigr] \leq\log E_0 \bigl[m(X)/\bigl(\pi_0
\phi(X)\bigr)\bigr].
\]
Since $E_0[m(X)/\phi(X)] = \int m(x) \,dx = 1$ and $\pi_0 = 1 - \eta$,
we obtain that
\[
\rho_B(0) \leq- \log(1- \eta) \leq\eta.
\]
%
Consequently, $\rho_B(0) = O(\eta)$ and so $\rho(0,\hat p_{T,\mathrm{CL}}) =
O(\eta
\lambda_f)$. Note that the above calculations hold for any $\hat
p_{T,\pi}$ with $\pi$ being a discrete prior in $\me$.

\textit{Maximum risk.}
From decomposition \eqref{eq:decomp},
our goal is to show that
%
\begin{equation}
\label{eq:goal} \sup_{\theta} \rho(\theta, \hat p_{T,\mathrm{CL}})
= (2r)^{-1} \lambda_f^2 \bigl(1+o(1)\bigr).
\end{equation}
We first isolate the main term in the contributions from
$\rho_A(\theta)$ and $\rho_B(\theta)$.
From~\eqref{eq:rho-a}, clearly $\rho_A(\theta) \leq a_{1r} + a_{2r} =
O(1)$, which does not contribute.
We turn to
\[
\rho_B(\theta) = E_\theta\bigl[L\bigl(\theta, \hat
p_\pi(\cdot|X)\bigr), |X| \leq\lambda_e\bigr]
\]
and returning to \eqref{eq:second},
we begin by claiming
that for $|x| \leq\lambda_e$ the final term $d(x) \leq
\log2$. Indeed,
%
\begin{equation}
\label{eq:exp(d)} \frac{m(x)}{\pi_0 \phi(x)} = 1 + \sum_{|k|=1}^K
\frac{\pi_k}{\pi_0} \frac{\phi(x-\mu
_k)}{\phi(x)} = 1 + \sum_{|k|=1}^K
\frac{\pi_k}{\pi_0} \exp \biggl\{ x \mu_k - \frac
{\mu_k^2}{2} \biggr
\}.
\end{equation}
For $|x| \leq\lambda_e$, we have
\[
x \mu_k - \mu_k^2/2 \leq
\lambda_e |\mu_k| - \mu_k^2/2 \leq
\lambda_e^2/2 = \log\eta^{-1}(1-\eta).
\]
%
Since $\pi_0 = 1- \eta$, we arrive at
%
\begin{equation}
\label{eq:d-bound} E_\theta\bigl[d(X), |X| \leq\lambda_e\bigr]
\leq\log2.
\end{equation}

The dependence of \eqref{eq:second} on $\theta$ may then be seen by
writing $x = \theta+ z$. The first two terms in \eqref{eq:second}
then take the form
\[
\frac{1}{2r} \bigl\{ \bigl[\theta-(1+r)\mu_k
\bigr]^2 - \bigl(r^2+r\bigr) \mu_k^2
\bigr\} - \mu_k z,
\]
while, after recalling that $\pi_k = \eta/(2K)$ and that
$\lambda_e^2 = 2 \log(1-\eta) \eta^{-1}$, the third term becomes
\[
\frac{1}{2} \lambda_e^2 + \log(2K) =
\frac{1}{2r} (1+r) \lambda_f^2 + \log(2K).
\]
We may therefore rewrite \eqref{eq:second} as
%
\begin{equation}
\label{eq:third} L\bigl(\theta, \hat p_\pi(\cdot|x)\bigr) \leq
\frac{1}{2r} \bigl[\lambda_f^2 + q_k(
\theta)\bigr] - \mu_k(x-\theta) + \log(2K) + d(x),
\end{equation}
where the $k${th} quadratic polynomial
\[
q_k(\theta) = \bigl[\theta- (1+r)\mu_k
\bigr]^2 - r^2 \mu_k^2 + r\bigl(
\lambda_f^2 - \mu_k^2\bigr).
\]
Denote the last three terms of \eqref{eq:third} by $J_k(x,\theta)$.
From \eqref{eq:support} and \eqref{eq:d-bound} we see that
\[
E_\theta[J_k, |X| \leq\lambda_e] \leq
\mu_k + \log(2K) + \log2 \leq\lambda_e + a + \log(4K) = o
\bigl(\lambda_f^2\bigr).
\]
Consequently, we obtain the key bound
%
\begin{equation}
\label{eq:main-bound} 
\rho_B(\theta) \leq\frac{1}{2r}
\Bigl[ \lambda_f^2 + \min_k
q_k(\theta) \Bigr] + o\bigl(\lambda_f^2
\bigr).
\end{equation}

Now we use the geometric structure of the support points $\mu_k$,
defined at \eqref{eq:support}.
We bound $\min_k q_k(\theta)$ above by considering the quadratic
polynomial $q_k(\theta)$ on
$I_k = [\mu_k, \mu_{k+1}]$ and
observe that these $2K$ intervals cover the range $(-\lambda_e-a,
-\lambda_f) \cup(\lambda_f,
\lambda_e + a)$ of interest.
See Figure~\ref{fig:second}.
Note that $q_k(\theta)$ achieves its
maximum on $I_k$ at both endpoints and that
%
\[
q_k(\mu_{k+1}) = q_k\bigl((1+2r)
\mu_k\bigr) = q_k(\mu_k) = r \bigl(
\lambda_f^2 - \mu_k^2\bigr).
\]
These maxima decrease with $k$ and so are bounded by
$q_1(\nu_\eta) = r( \lambda_f^2 - \nu_\eta^2)$.
Appealing now to bound \eqref{eq:diffbound}, we have
for $\lambda_f \leq|\theta| \leq\lambda_e + a$,
\[
\min_k q_k(\theta) \leq r\bigl(
\lambda_f^2 - \nu_\eta^2\bigr)
\leq2r \sqrt{v_w} a \lambda_f.
\]

Returning to \eqref{eq:main-bound}, we now see that the last two terms
are each $o(\lambda_f^2)$ and so the final bound \eqref{eq:goal} is proven.
This completes the proof of Lemma~\ref{lem:up-bound-sec4}.


\begin{figure}

\includegraphics{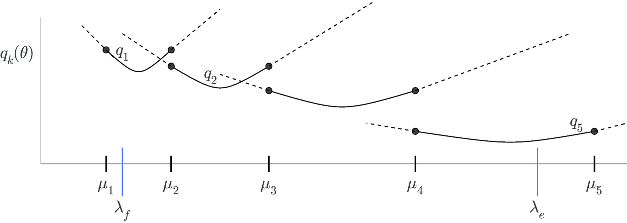}

\caption{Schematic diagram demonstrating the behavior of the quadratic
polynomials $q_k(\theta)$ in the interval $[\mu_1,\mu_{K+1}]$.
Here $K=4$.
The maximum of $\min_k q_k(\theta)$ for $\theta\in[\mu_1,\mu_{K+1}]$
is bounded by $q_1(\mu_1)$.}
\label{fig:second}
\end{figure}

These calculations apply to threshold density estimates based on Bayes
estimates of discrete priors. In particular, for $\phat_{T,\mathrm{LF}}$ which
is based on the $3$-point prior $\pi_3[\eta,\nu_{\eta}]$, we have
$K=1$ and the bound \eqref{eq:three-pt-bound}. Thus, the difference
$\rho_B (\theta) - \lambda_f^2/2r$ in this case is negligible when
$|\theta| \leq\mu_2$.

Similarly, the asymptotic risk function of the hard threshold plug-in
density estimate $\hat p_{T,\pi_0}$ (for which $K=0$ in our
calculations above) exceeds the minimax risk $\beta(\eta,r)$ for
$|\theta| \in[\lambda_f,\lambda_e]$ and so is minimax suboptimal for
any fixed $r$. Figure~\ref{fig:risk-simu} shows the numerical
evaluation of the risk functions for the different univariate
threshold density estimates.

\begin{figure}

\includegraphics{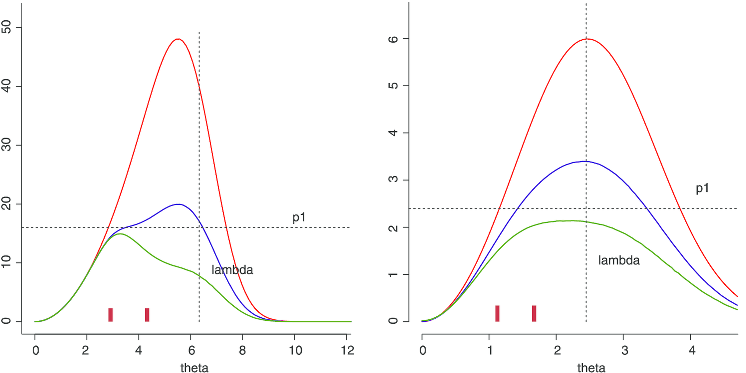}

\caption{Numerical evaluation of the asymptotic risk $\rho_B(\theta)$
for $r = 0.25 $ of univariate threshold density estimates:
hard threshold plug-in estimate $\hat p_{T,\pi_0}$ (red), $\hat
p_{T,\mathrm{LF}}$ (green) and the cluster prior-based minimax optimal estimate
$\hat p_{T,\mathrm{CL}}$ (blue). The brown boxes show the nonzero support point
of the cluster prior and the univarate asymptotic minimax risk $\beta
(\eta,r) = (2r)^{-1}\lambda_f^2$ and the threshold $\lambda_e$ are
respectively denoted by dotted horizontal and vertical lines. The plot
on left has $\eta=e^{-20}$ (very high sparsity), $\lambda_f=2.83$,
$\lambda_e=6.32$ and the right one has $\eta=0.05$ (moderate sparsity),
$\lambda_f=1.09$, $\lambda_e=2.45$.}\label{fig:risk-simu}
\end{figure}

Also, note that any threshold estimate $\hat p_{T}[\lambda]$ with
threshold size $\lambda$ less than $\lambda_e$ will be minimax
suboptimal, as its risk at the origin will not be negligible as
compared to $\beta(\eta,r)$. By \eqref{eq:th-decom} and
\eqref{eq:rho-a} we have
%
\begin{eqnarray}
\label{eq:min_threshold_size} \rho\bigl(0,\hat p_T[\lambda]\bigr) &\geq&2
a_{2r} E\bigl[Z^2 I\{Z > \lambda\} \bigr] = 2
a_{2r} \bigl\{\lambda\phi(\lambda) + 2 \tilde{\Phi}(\lambda)\bigr\}
\nonumber
\\[-8pt]
\\[-8pt]
\nonumber
&\geq& \lambda\phi(\lambda)/(1+r),
\end{eqnarray}
and so for any fixed $\varepsilon> 0$,
\[
\liminf_{1 \leq\lambda< \lambda_e(\eta) - \varepsilon} \frac{
\rho(0,\hat p_T[\lambda])}{\beta(\eta,r)} \to\infty.
\]
Thus, $\hat p_{T}[\lambda]$ is suboptimal unless $\lambda\geq
\lambda_e$.


\section{Theorem~\texorpdfstring{\protect\hyperref[th:mmx-risk]{1}}{1}: Multivariate minimax risk}\label{sec-multivariate}

Here we will use the univariate minimax results developed in the
previous sections to evaluate the asymptotic multivariate minimax risk
$R_n=R_{N}(\Theta_n[s_n])$ over the sparse parameter space
$\Theta_n[s_n]$.

\subsection{Lower bound proof: Theorem~\texorpdfstring{\protect\ref{th:least-fav}}{1B} and an extension}
\label{sec:theorem-1b:-lower}

We first prove a lower bound for the multivariate minimax risk under
only the assumption that $s_n/n \to0$---without requiring, as in
Theorem~\ref{th:least-fav}, that also $s_n \to\infty$.
This is done using an ``independent blocks'' sparse prior, along the
lines of \citet{Johnstone-book}, Chapter~8.6, that we will show to be asymptotically least
favorable.
This result establishes the lower bound half of Theorem~\ref{th:mmx-risk}.
At the end of the subsection, we prove Theorem~\ref{th:least-fav} using the
simpler i.i.d. prior.


Let $\pi_S(\tau;m)$ denote a single spike prior of scale $\tau$ on
$\mathbb{R}^m$: choose an index $I \in\{1, \ldots, m \}$ at random
and set $\theta= \tau e_I$, where $e_I$ is a unit length vector in
the $i$th coordinate direction.
We will use a scale $\tau_m = \lambda_m - \log\lambda_m$ which is
somewhat smaller than $\lambda_m = \sqrt{2 \log m}$.

The independent blocks prior $\pi^\mathrm{ IB}$ on $\Theta[s_n]$ is built
by dividing $\{1, \ldots, n\}$ into $s_n$ contiguous blocks
$B_j, j = 1, \ldots, m$ each of length $m = m_n = [n/s_n]$.
Draw components $\theta_i$ in each block $B_j$ according to an
independent copy of $\pi_S(\nu_m; m)$ where the scale $\nu_m = \sqrt
v_w \tau_m$ is matched to the prediction setting.
Finally, set $\theta_i = 0$ for the remaining $n - m_n s_n$
components.
Thus, $\pi^\mathrm{ IB}$ is supported on $\Theta[s_n]$ since any draw
$\theta$ from $\pi^\mathrm{ IB}$ has exactly $s_n$ nonzero components.

The lower bound half of Theorem~\ref{th:mmx-risk} follows from the
following result, the analog of Theorem~\ref{th:least-fav} for the
independent blocks prior.

\begin{theorem}
\label{th:mult-lower-bound}
Fix $r \in(0,\infty)$. If $s_n/n \to0$, then
\[
R_N\bigl(\Theta_n[s_n]\bigr) \geq B\bigl(
\pi_n^\mathrm{ IB}\bigr) \geq(1+r)^{-1} s_n
\log(n/s_n).
\]
\end{theorem}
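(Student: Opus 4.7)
The plan is to exploit the product structure of $\pi_n^{\rm IB}$ over its $s_n$ blocks and reduce the multivariate lower bound to a single-block ``invisibility'' calculation that extends Lemma~\ref{lem:lower-bound}. Because coordinates in different blocks are independent under $\pi_n^{\rm IB}$ and $p(y|\theta, r)$ is a coordinatewise product, the Bayes predictive density factors over blocks; moreover, coordinates outside the blocks (where $\pi_n^{\rm IB}$ deterministically sets $\theta_i = 0$) contribute zero risk. Hence
\begin{displaymath}
  B(\pi_n^{\rm IB}) = s_n \, B(\pi_S(\nu_m; m)),
\end{displaymath}
with $m = [n/s_n]$, and the first inequality $R_N(\Theta_n[s_n]) \geq B(\pi_n^{\rm IB})$ is immediate since $\pi_n^{\rm IB}$ is supported on $\Theta_n[s_n]$.

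For the single-block Bayes risk, I would invoke the multivariate version of the mixture representation \eqref{eq:mixture} and, by the permutation symmetry of the single-spike prior, evaluate at $\theta = \nu_m e_1$:
\begin{displaymath}
  B(\pi_S(\nu_m; m)) = \rho(\nu_m e_1, \hat p_{\pi_S(\nu_m; m)}) = \tfrac{1}{2} \int_{v_w}^1 q(\nu_m e_1, \hat \theta_{\pi_S(\nu_m;m), v}; v) \frac{dv}{v^2}.
\end{displaymath}
The same variance-rescaling invariance used in the proof of Lemma~\ref{lem:lower-bound} still applies because the single-spike prior class is closed under $\nu \mapsto v^{-1/2}\nu$; each integrand becomes a unit-noise quadratic risk at a rescaled spike $\mu = v^{-1/2}\nu_m$, which sweeps the interval $[\nu_m, \tau_m]$ as $v$ varies over $[v_w, 1]$.

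The main obstacle is the multivariate analog of Lemma~\ref{lem:qr}: to show that there exists $\epsilon_m \to 0$ such that for every $\mu \in [0, \tau_m]$,
\begin{displaymath}
  q(\mu e_1, \hat \theta_{\pi_S(\mu; m)}; 1) \geq \mu^2 (1 - \epsilon_m).
\end{displaymath}
Writing the Bayes rule as $\hat \theta_1(W) = \mu/(1 + M(W))$ with
\begin{displaymath}
  M(W) = \sum_{j=2}^m \exp\{\mu(W_j - W_1)\},
\end{displaymath}
projecting onto the first coordinate gives $q \geq \mu^2 \, E_{\mu e_1} [M/(1+M)]^2$, so it suffices to show that $M \to \infty$ in probability, uniformly in $\mu \in [0, \tau_m]$. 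Under $W_1 = \mu + Z_1$ and $W_j = Z_j$ iid $N(0,1)$ for $j \geq 2$, the partition sum $\sum_{j \geq 2} e^{\mu Z_j}$ has mean $(m-1)e^{\mu^2/2}$, so typical values of $M$ are of order $m \exp(-\mu^2/2 - \mu Z_1)$. The overshoot choice $\tau_m = \lambda_m - \log \lambda_m$, kept strictly below the detection threshold $\lambda_m = \sqrt{2 \log m}$, makes $m e^{-\mu^2/2} \geq e^{\lambda_m \log \lambda_m - o(\lambda_m^2)}$ grow; standard concentration for the log-partition sum then yields $E[M/(1+M)]^2 \to 1$ uniformly over $\mu \in [0, \tau_m]$.

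Assembling the pieces, the invisibility bound combined with the mixture representation gives $B(\pi_S(\nu_m; m)) \geq \nu_m^2/(2r)(1 - o(1))$; since $\nu_m^2 = v_w \tau_m^2 \sim 2 v_w \log m$ and $v_w / r = (1+r)^{-1}$, multiplying by $s_n$ and using $\log m \sim \log(n/s_n)$ yields $B(\pi_n^{\rm IB}) \geq (1+r)^{-1} s_n \log(n/s_n)(1 + o(1))$, as claimed.
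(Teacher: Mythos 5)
Your proposal is correct and follows essentially the same route as the paper: reduce to a single block via the product structure, apply the Brown--George--Xu mixture representation with variance rescaling, and then invoke a uniform lower bound on the quadratic Bayes risk of the rescaled single-spike prior over $\mu \in [0,\tau_m]$. Your ``multivariate analog of Lemma~\ref{lem:qr}'' is (by the permutation symmetry you note) precisely Proposition~\ref{prop:spikebound}, and your one-coordinate projection onto $\hat\theta_1 = \mu/(1+M)$ is the bound the paper cites from [J13, Lemma 8.13]; the only place you gloss is the uniform-in-$\mu$ concentration of $M$, which is exactly what the paper's $V_n, W_{n-1}$ decomposition carries out in detail.
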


\begin{pf}
Bounding maximum risk by Bayes risk and using the product structure
shows that
%
\begin{equation}
\label{eq:first} R_{n} = R_{N}\bigl(\Theta_n[s_n]
\bigr) \geq B\bigl(\pi_n^\mathrm{ IB}\bigr) = s_n B
\bigl(\pi_S(\nu_m; m)\bigr).
\end{equation}
Next, using $B_Q^v$ to denote the Bayes risk for noise level $v$, the
multivariate form of the connecting equation and scale invariance
enable us to write
\[
B\bigl(\pi_S(\nu_m; m)\bigr) = \frac{1}{2} \int
_{v_w}^1 B_Q^v\bigl(
\pi_S(\nu_m;m)\bigr) \frac{dv}{v^2} =
\frac{1}{2} \int_{v_w}^1 B_Q
\biggl(\pi_S \biggl(\frac{\nu_m}{\sqrt
v};m \biggr) \biggr)
\frac{dv}{v}.
\]
The next lemma, proved in Section~S.5 of \citet
{Supplementary}, provides a uniform lower bound for the quadratic loss
Bayes risk of a single spike prior.
It is a multivariate analog of Lemma~\ref{lem:qr}.

\begin{proposition}
\label{prop:spikebound}
Suppose that $y \sim N_n(0,I)$.
Set $\lambda_n = \sqrt{2 \log n}$
and $\tau_n = \lambda_n - \log\lambda_n$.
Then there exists $\varepsilon_n \to0$ such that uniformly in $\tau
\in
[0, \tau_n]$,
\[
B_q\bigl( \pi_S(\tau;n)\bigr) \geq\tau^2
(1 - \varepsilon_n).
\]
\end{proposition}

Noting that $v \in
[v_w,1]$ implies that $\nu_m/\sqrt v \leq\nu_m/\sqrt{v_w} = \tau_m$,
and then applying the proposition,
\[
B\bigl(\pi_S(\nu_m; m)\bigr) \geq\frac{(1-\varepsilon_m)}{2}
\int_{v_w}^1 \frac{\nu_m^2}{v^2} \,dv = (1 -
\varepsilon_m) \frac{\nu_m^2}{2r}.
\]
Combining this with \eqref{eq:first} and the definition of
$\nu_m$, we obtain
%
\begin{equation}
\label{eq:m-lower} R_{n} \geq(1-\varepsilon_m)
s_n v_w \tau_m^2 / (2r)
\sim(1+r)^{-1} s_n \log(n/s_n). 
\end{equation}
\upqed\end{pf}

\begin{pf*}{Proof of Theorem~\ref{th:least-fav}}
Note that because of the product
structure of the problem and the prior
$\pi_n^{\mathrm{IID}}$ we have
\[
B\bigl(\pi_n^{\mathrm{IID}}\bigr)=\sum_{i=1}^n
\beta(\eta_n,r)=n \beta(\eta_n,r),
\]
which is asymptotically equal to $R_N(\Theta[s_n])$,
using the univariate Theorem~\ref{th:univ-mmx-result}
[cf.~\eqref{eq:beta-rate}] and
%
\begin{equation}
\label{eq:lam-f-rate} (2r)^{-1} \lambda_f^2 =
(2r)^{-1} v_w \lambda_e^2 \sim
(1+r)^{-1} \log\eta_n^{-1}\qquad \mbox{as } n \to
\infty.
\end{equation}
Also, as $s_n \to\infty, \pi_n^\mathrm{ IID}(\Theta[s_n]) \to1$ by
application of Chebyshev's inequality and, hence, $\pi_n^{\mathrm{IID}}$ is an
asymptotically least favorable prior under the conditions of
Theorem~\ref{th:least-fav}.
\end{pf*}
%

\subsection{Upper bound proof: Theorem~\texorpdfstring{\protect\ref{th:asy-mmx-rule}}{1C}}
\label{sec:theorem-1c:-upper}

First, an upper bound on\break $R_{N}(\Theta_n[s_n])$
is derived based on the maximum risk of the multivariate product
threshold density estimate $\hat p_{T,\mathrm{CL}}$ defined in Theorem~\ref{th:asy-mmx-rule}.
Using the product structure of the threshold estimate as well as that
of the unknown future density
\[
\hat p_{T,\mathrm{CL}}( y| x) = \prod_{i=1}^n
\hat p_{T,\mathrm{CL}}(y_i|x_i) \quad\mbox{and}\quad p(y|
\theta, r)=\prod_{i=1}^n
p(y_i|\theta_i,r),
\]
the risk of our multivariate threshold estimate simplifies as an
agglomerative coordinate wise risk of the respective univariate density
estimates
\[
\rho(\theta, \hat p_{T,\mathrm{CL}}) =
 E_{\theta} \log\frac{p(y| \theta, r)}{\hat p_{T,\mathrm{CL}} (y| x)} =
\sum_{i=1}^n \rho( \theta_i,
\hat p_{T}).
\]
Now, maximizing over $\theta\in\Theta_n[s_n]$, we have
\[
R_{n} \leq\sup_{\Theta_n[s_n]} \rho( \theta, \hat
p_{T,\mathrm{CL}}) \leq(n-s_n) \rho( 0, \hat p_{T,\mathrm{CL}}) +
s_n \sup_\theta\rho(\theta, \hat p_{T,\mathrm{CL}}).
\]

From the univariate study, we know that $\rho(0, \hat p_{T,\mathrm{CL}}) =
O(\eta_n \lambda_f)$, which makes $(n-s_n) \rho( 0, \hat p_{T,\mathrm{CL}})
=O(s_n \lambda_f)$ negligible relative to
\[
s_n \sup_\theta\rho( \theta, \hat
p_{T,\mathrm{CL}}) = (2r)^{-1} s_n \lambda_f^2
\bigl(1 + o(1)\bigr),
\]
where we used \eqref{eq:goal}.
Thus, taking account also of \eqref{eq:lam-f-rate}, we have the
desired upper bound on the minimax risk
%
\begin{equation}
\label{eq:m-upper} R_{n} \leq(2r)^{-1} s_n
\lambda_f^2 \bigl(1 + o(1)\bigr) \sim(1+r)^{-1}
s_n \log(n/s_n).
\end{equation}

\textit{Completion of Proof of Theorems~\ref{th:mmx-risk}, \ref{th:least-fav}
and \ref{th:asy-mmx-rule}}:
As the lower bound \eqref{eq:m-lower}
and upper bound \eqref{eq:m-upper} on $R_n$ match asymptotically,
the first order asymptotic minimax risk of Theorem~\ref{th:mmx-risk}
is achieved, and the proof of all parts is done.

\subsection{Proof of Proposition~\texorpdfstring{\protect\ref{prop:further-results-1}}{1}}
\label{sec:fill}

Estimates in $\mathcal{L}$ and $\mathcal{G}$ are products of the form~\eqref{eq:prod-rule} and so
$R_{\mathcal{L},n}=R_{\mathcal{L}}(\Theta_n[s_n])$ can be studied
using the associated univariate problem and decomposition
\eqref{eq:multi-sup}.
It is shown in Appendix~\ref{A:gaussian_risk} that
%
\begin{equation}
\label{eq:lin-risk} \rho(\theta, \phat_{L,\alpha}) = \frac{1}{2} \log
\biggl(1 + \frac{\alpha}{r} \biggr) + \frac{(1-\alpha)^2}{2(r+\alpha)} \biggl[
\theta^2 - \frac{\alpha}{1-\alpha} \biggr].
\end{equation}
Thus, $\sup_\theta\rho(\theta, \phat_{L,\alpha})$ is infinite unless
$\alpha= 1$,
that is, the uniform prior estimate $\hat p_U$,
in which case
$\rho(\theta, \phat_U) \equiv\frac{1}{2} \log(1+r^{-1})$. Thus,
\[
R_{\mathcal{L},n}= \frac{n}{2} \log\bigl(1+r^{-1}\bigr) \gg
\frac{s_n}{1+r} \log \biggl(\frac{n}{s_n} \biggr) \sim R_n.
\]
In particular, $R_{\mathcal{L},n}/R_n \to\infty$ when $s_n/n \to0$.

We turn to the Gaussian class $\mathcal{G}$.
Since $\mathcal{E} \subset\mathcal{G}$, clearly
$R_{\mathcal{G},n} \leq R_{\mathcal{E},n}=(2r)^{-1} n \eta_n
\lambda_e^2$.
We give here a heuristic argument for the reverse inequality, which
gives the idea for the rigorous proof given in Section~S.3 of the supplementary material [\citet{Supplementary}].
From the decomposition \eqref{eq:multi-sup}, any near-optimal
estimator in $\mathcal{G}$ must have univariate risk at $0$ bounded as
follows:
%
\begin{equation}
\label{eq:uni_risk_bound_prop_1} \rho( 0, \hat p_1) \leq r^{-1}
\eta_n \lambda_e^2.
\end{equation}
Now from \eqref{eq:gaussian_risk} we know that the risk at the origin
for the univariate Gaussian density estimate $p[\that,\dhat]$ is
\[
\rho\bigl(0,p[\that,\dhat]\bigr)=2^{-1}E_0\bigl\{\log
\bigl(r^{-1}\dhat\bigr)+\dhat ^{-1}\bigl(r+\that
^2\bigr)-1\bigr\}, %
\]
which for any fixed choice of $\that$ achieves its minimum at
$d_{\opt}[\that]=r+\that^2$.
Thus, for such an optimal choice of $\dhat$,
\[
\rho\bigl(0,p\bigl[\that, d_{\opt}(\that)\bigr]\bigr) =
E_0 \log\bigl(1+r^{-1}\that^2\bigr),
\]
and for this to satisfy \eqref{eq:uni_risk_bound_prop_1}, we must have
$\that(x) \approx0$ for $|x| \leq\lambda_e(1+o(1))$.
Thus, $\phat$ would approximately need to have the threshold structure
\eqref{eq:p-hat-T},~\eqref{eq:zero-prior} for $|x| \leq\lambda_e$ and
so the bound \eqref{eq:quadratic} shows that
\[
\rho(\theta,\phat_1) \geq\frac{\theta^2}{2r} P_{\theta}\bigl(|X|
\leq\lambda_e\bigr) \sim\frac{\lambda_e^2}{2r}.
\]
Returning to decomposition \eqref{eq:multi-sup}, we can now see that
$R_{\mathcal{G},n} \gtrsim(2r)^{-1} s_n \lambda_e^2 \sim
R_{\mathcal{E},n}$, which completes the heuristic argument.

\section{Discussion}

\textit{Avoiding thresholding.}
The asymptotic minimax rules $\hat p_T$ described in Theorems
\ref{th:asy-mmx-rule} and \ref{th:univ-mmx-result}\textsc{c} are based on
thresholding.
It would be desirable to construct a prior $\pi$ for which the Bayes
predictive density $\hat p_\pi$ in (\ref{eq:bpd}) is itself
asymptotically minimax, without any use of the discontinuous thresholding
operation.

Consider, then, a symmetric univariate prior $\pi_\infty[\eta,r]$
whose support consists of the origin and infinite number of
equidistant clusters each containing $2K$ points
in the same spatial alignment as for $\pi_{\mathrm{CL}}[\eta,r]$:
\[
\pi_\infty[\eta,r] = (1-\eta) \delta_0 + \frac{1-\eta}{2}
\sum_{j=0}^\infty\eta^{j+1} \sum
_{k=1}^K q_k (
\delta_{\mu_{jk}} + \delta_{-\mu_{jk}}),
\]
where
$\mu_{jk} = j \lambda_e + \mu_k$ and for $k = 2, \ldots, K$ and
$\gamma
= \log\eta^{-1}$, we have
$ q_k = \gamma^{-k}$ and $q_1 = 1 - \sum_2^K q_k$.
%

Based on $\pi_\infty[\eta_n,r]$, one can construct a multivariate prior
$\pi_{n,\infty}^\mathrm{ IID}$ using (\ref{eq:iid-prior}), which
heuristic arguments indicate will not
only be least favorable but also yield a minimax optimal density
estimate. A detailed proof is forthcoming.

\textit{Approximate sparsity and other extensions}.
Starting from
\citet{Johnstone-book}, Chapters~8 and 13, the $\ell_0$
sparsity results presented here can be extended to obtain minimax
optimal predictive density estimates over\vadjust{\goodbreak} weak and strong $\ell_p$ sparse
parameter spaces.
An interesting topic for future work will be whether, as in
point estimation
[\citet{Donoho94b}], the phenomena seen here can be generalized to a
family of loss functions. Simple analogues of the connecting equations
[\citet{Brown08}, Theorem~1] between the predictive and quadratic PE
regimes do not exist in those cases, though some of the decision
theoretic parallels can still be proved particularly for the $\ell_2$
loss [\citet{Gatsonis84}].

%


\begin{appendix}\label{app}
\section*{Appendix}

\subsection{Bayes density estimate for discrete priors} The posterior
distribution for the discrete prior $\pi=\sum_{k=-K}^K \pi_k \delta
_{\mu
_k}$ is given by
\[
\pi(\mu_k|x)= \bigl\{ m(x)\bigr\}^{-1} \phi(x|
\mu_k,1) \pi_k\qquad \mbox{where }m(x)=\sum
_{k} \pi_k \phi(x|\mu_k,1).
\]
So, for the Bayes predictive density based on the prior $\pi$,
%
\begin{equation}
\label{eq:bayes_discrete} \hat p_{\pi}(y|x) =\sum_{k=-K}^K
\phi(y|\mu_k,r) \pi(\mu_k|x) =\sum
_{k=-K}^K \phi(y|\mu_k,r)
\frac{\phi(x|\mu_k,1) \pi_k}{m(x)}.
\end{equation}

\subsection{K--L risk for gaussian and linear density
estimates}\label{A:gaussian_risk}

The predictive risk of the univariate Gaussian density estimate
$p [ \that,\dhat]=N(\that,\dhat)$ is given by
\[
\rho\bigl(\theta,p [ \that,\dhat]\bigr)=E_{\theta}\bigl\{ \log\phi(Y|
\theta,r) \bigr\} - E_{\theta}\bigl\{ \log\phi\bigl(Y|\that(X), \dhat(X)
\bigr)\bigr\}, %
\]
where the expectation is over $X \sim N(\theta,1)$ and $Y \sim
N(\theta,r)$. Noting that
\[
E_{\theta}\bigl\{\log\phi(Y|\that, \dhat)|X=x\bigr\}= - \tfrac{1}{2}
\log\bigl(2 \pi\dhat(x)\bigr) - \bigl(2 \dhat(x)\bigr)^{-1}\bigl\{r+
\bigl(\that(x)-\theta\bigr)^2 \bigr\}
\]
and $E_\theta\log\phi(Y|\theta,r)
= - \frac{1}{2} \log(2 \pi r) - \frac{1}{2}$, we obtain
%
\begin{equation}
\label{eq:loss-form} L\bigl(\theta, \phat(\cdot|x)\bigr) 
=
\frac{1}{2} \log\bigl(r^{-1} \hat d\bigr) + \frac{r+(\that(x)-\theta)^2}{2 \hat d} -
\frac{1}{2},
\end{equation}
and the following expression for the K--L risk of members in $\mathcal{G}$:
%
\begin{equation}
\label{eq:gaussian_risk} \rho\bigl(\theta,p [ \that,\dhat]\bigr)=\frac{1}{2} \biggl[
E_{\theta} \log \bigl(r^{-1}\dhat\bigr) + E_{\theta} \biggl
\{ \frac{ r+(\that-\theta
)^2}{\dhat} \biggr\} - 1 \biggr].
\end{equation}

Consider now ``linear'' estimators. Starting with the conjugate prior
$\theta\sim N(0, \alpha/(1-\alpha))$ for $0 \leq\alpha\leq1$,
standard calculations show that the posterior density
$\pi(\theta|x)$ is $N(\alpha x, \alpha)$ and the predictive density
$\phat_{L,\alpha}$, being the convolution of Gaussians, compare
\eqref{eq:bpd}, is seen to be $N(\alpha x, r+\alpha)$.
Now, using
$\dhat=r+\alpha$ and $\that=\alpha X$ in \eqref{eq:gaussian_risk},
we get
\[
\rho(\theta,\phat_{L,\alpha})=\tfrac{1}{2} \bigl[ \log
\bigl(1+r^{-1}\alpha\bigr) +(r+\alpha)^{-1} \bigl\{
r+E_{\theta}(\alpha X-\theta)^2\bigr\} - 1 \bigr].
\]
The linear risk formula~\eqref{eq:lin-risk} now follows from the
quadratic risk of
$\alpha X$. Next, we present some details about the risk of the
particular linear estimate $\phat_U$.

\textit{Proof of} \eqref{eq:invloss}.
The estimator $\hat p_U = \phat_{L,1}$ is given by
the $N(x, 1+r)$ distribution, and so from \eqref{eq:loss-form}
\[
L\bigl(\theta, \hat p_U(\cdot|x)\bigr) = \frac{1}{2} \log
\bigl(1+r^{-1}\bigr) + \frac{r + (\theta- x)^2}{2(1+r)} - \frac{1}{2},
\]
from which \eqref{eq:invloss} is immediate.
\end{appendix}

\section*{Acknowledgments}
We thank the Associate Editor and three referees for constructive
suggestions to shorten and improve the paper.


\begin{supplement}
\stitle{Supplementary material to ``Exact minimax estimation of the
predictive density in sparse Gaussian models''}
\slink[doi]{10.1214/14-AOS1251SUPP} 
\sdatatype{.pdf}
\sfilename{aos1251\_supp.pdf}
\sdescription{The supplement \citet{Supplementary} contains
a brief description of the relevance of the predictive
density estimation problem in related application
areas along with the proof for the suboptimality
of the univariate threshold density estimate
$\phat_{T,\mathrm{LF}}$ (in Section~S.2) and the
details of the proof of Proposition~\ref{prop:further-results-1}
(in Section~S.3). The arguments for
the maximum quadratic risk of hard threshold point
estimates are reviewed in Section~S.4
and the proof of Proposition~\ref{prop:spikebound} is
presented in Section~S.5.
Links to R-codes used in producing Table~\ref{table:support.points}
and Figure~\ref{fig:risk-simu} are also provided.}
\end{supplement}


%



\printaddresses
\end{document}